\newtheorem{theorem}{Theorem}[section]
\newtheorem{proposition}[theorem]{Proposition}
\newtheorem{lemma}[theorem]{Lemma}
\theoremstyle{definition}
\newtheorem*{definition}{Definition}
\theoremstyle{remark}
\newtheorem*{remark}{Remark}
\newtheorem*{note}{Note}
\numberwithin{equation}{section}
\begin{document}

\title[Bernstein-Szeg\"o Polynomials for Root Systems]
{Bernstein-Szeg\"o Polynomials Associated with Root Systems}

%    Information for authors
\author{J.F. van Diejen, A.C. de la Maza, and S. Ryom-Hansen}
\address{
Instituto de Matem\'atica y F\'{\i}sica,
Universidad de Talca,
Casilla 747, Talca, Chile}

\thanks{Work supported in part by the {\em Fondo Nacional de Desarrollo
Cient\'{\i}fico y Tecnol\'ogico (FONDECYT)} Grants \# 1051012, \#
1040896, and \# 1051024, by the {\em Anillo Ecuaciones Asociadas a
Reticulados} financed by the World Bank through the {\em Programa
Bicentenario de Ciencia y Tecnolog\'{\i}a}, and by the {\em Programa
Reticulados y Ecuaciones of the Universidad de Talca}.}

\subjclass{Primary: 05E05; Secondary: 05E35, 33D52}
\keywords{Symmetric Functions, Orthogonal Polynomials, Root Systems}

\date{November, 2006}

\begin{abstract}
We introduce multivariate generalizations of the Bernstein-Szeg\"o
polynomials, which are associated to the root systems of the complex
simple Lie algebras. The multivariate polynomials in question
generalize Macdonald's Hall-Littlewood polynomials associated with
root systems. For the root system of type $A_1$ (corresponding to
the Lie algebra ${\mathfrak sl} (2;\mathbb{C})$) the classic
Bernstein-Szeg\"o polynomials are recovered.
\end{abstract}

\maketitle
%\tableofcontents

\section{Introduction}\label{sec1}
Recent years have revealed the birth of an elegant multivariate
generalization of the classical theory of (basic) hypergeometric
orthogonal polynomials based on the root systems of complex simple
Lie algebras
\cite{opd:some,hec-sch:harmonic,mac:orthogonal,dun-xu:orthogonal,mac:affine}.
The purpose of the present paper is to introduce---in the same
spirit---a multivariate generalization of the Bernstein-Szeg\"o
polynomials \cite{sze:orthogonal}.

By definition, Bernstein-Szeg\"o polynomials $p_\ell(x)$, $\ell
=0,1,2,\ldots$, are the trigonometric orthogonal polynomials
obtained by Gram-Schmidt orthogonalization of the Fourier-cosine
basis $m_\ell (x)=\exp (i\ell x)+\exp(-i\ell x)$, $\ell
=0,1,2,\ldots$, with respect to the inner product
\begin{subequations}
\begin{equation}
\langle m_\ell , m_k \rangle_\Delta = \frac{1}{2\pi} \int_0^{\pi}
m_\ell (x) \overline{m_k(x)}\Delta (x)\text{d} x,
\end{equation}
characterized by the nonnegative rational trigonometric weight
function of the form
\begin{equation}\label{weight}
\Delta (x) = \frac{|\delta (x)|^2}{c(x)c(-x)},\quad c(x)=
\prod_{m=1}^M (1+t_me^{-2ix}),
\end{equation}
\end{subequations}
where $\delta(x):= \exp (ix)-\exp(-ix)$. Here the parameters
$t_1,\ldots ,t_M$ are assumed to lie in the domain $(-1,1)\setminus
\{ 0\}$. A crucial property of the polynomials in question is
that---for sufficiently large degree $\ell$---they are given
explicitly by the compact formula \cite{sze:orthogonal}
\begin{subequations}
\begin{equation}\label{ef}
p_\ell (x)= \frac{1}{\mathcal{N}_\ell \delta(x)} \left(
c(x)e^{i(\ell +1)x}-c(-x)e^{-i(\ell+1) x} \right) , \qquad \ell \geq
M-1,
\end{equation}
where
\begin{equation}\label{ef2}
\mathcal{N}_\ell=
\begin{cases}
1-t_1\cdots t_M  &\text{if}\ \ell = M-1 ,\\
 1 &\text{if}\ \ell \geq M .
\end{cases}
\end{equation}
\end{subequations}
Furthermore, the quadratic norms of the corresponding
Bernstein-Szeg\"o polynomials are given by \cite{sze:orthogonal}
\begin{equation}\label{nf}
\langle p_\ell , p_\ell \rangle_\Delta = \mathcal{N}_\ell^{-1},
\qquad \ell \geq M-1 .
\end{equation}

The main result of this paper is a multivariate generalization of
these formulas associated with the root systems of the complex
simple Lie algebras (cf. Theorems \ref{form:thm}, \ref{orth:thm} and
\ref{norm:thm} below). The classical formulas in Eqs.
\eqref{ef}--\eqref{nf} are recovered from our results upon
specialization to the case of the Lie algebra ${\mathfrak sl}
(2;\mathbb{C})$ (corresponding to the root system $A_1$). Particular
instances of the multivariate Bernstein-Szeg\"o polynomials
discussed here have previously surfaced in Refs.
\cite{rui:factorized,die:asymptotic}, in the context of a study of
the large-degree asymptotics of Macdonald's multivariate basic
hypergeometric orthogonal polynomials related to root systems
\cite{mac:orthogonal,mac:affine}. The simplest examples of our
multivariate Bernstein-Szeg\"o polynomials---corresponding to weight
functions characterized by $c$-functions of degree $M=0$ and degree
$M=1$, respectively---amount to the celebrated Weyl characters and
to Macdonald's Hall-Littlewood polynomials associated with root
systems \cite{mac:spherical,mac:orthogonal}.

The paper is organized as follows. In Section \ref{sec2} the main
results are stated. The remainder of the paper, viz.  Sections
\ref{sec3}--\ref{sec5}, is devoted to the proofs.

\begin{note}
Throughout we will make extensive use of the language of root
systems. For preliminaries and further background material on root
systems the reader is referred to e.g. Refs.
\cite{bou:groupes,hum:introduction}.
\end{note}

\section{Bernstein-Szeg\"o polynomials for root systems}\label{sec2}
Let $\mathbf{E}$ be a real finite-dimensional Euclidian vector space
with scalar product $\langle \cdot , \cdot\rangle$, and let
$\mathbf{R}$ denote an irreducible crystallographic root system
spanning $\mathbf{E}$. Throughout it is assumed that $\mathbf{R}$ be
\textit{reduced}. We will employ the following standard notational
conventions for the dual root system
$\mathbf{R}^\vee:=\{\boldsymbol{\alpha}^\vee \mid
\boldsymbol{\alpha}\in \mathbf{R} \}$ (where
$\boldsymbol{\alpha}^\vee :=
2\boldsymbol{\alpha}/\langle\boldsymbol{\alpha} ,\boldsymbol{\alpha}
\rangle$), the root lattice
$\mathcal{Q}:=\text{Span}_{\mathbb{Z}}(\mathbf{R})$ and its
nonnegative semigroup
$\mathcal{Q}_+:=\text{Span}_{\mathbb{N}}(\mathbf{R}_+)$ generated by
the positive roots $\mathbf{R}_+$; the duals of the latter two
objects are given by the weight lattice $\mathcal{P}:=\{
\boldsymbol{\lambda}\in\mathbf{E}\mid \langle \boldsymbol{\lambda}
,\boldsymbol{\alpha}^\vee\rangle \in\mathbb{Z},\ \forall
\boldsymbol{\alpha}\in\mathbf{R} \}$ and its dominant integral cone
$\mathcal{P}_+ :=\{ \boldsymbol{\lambda}\in\mathcal{P}\mid \langle
\boldsymbol{\lambda} ,\boldsymbol{\alpha}^\vee\rangle
\in\mathbb{N},\ \forall \boldsymbol{\alpha}\in\mathbf{R}_+ \}$.
Finally, we denote by $W$ the Weyl group generated by the orthogonal
reflections $r_{\boldsymbol{\alpha}}:\mathbf{E}\to\mathbf{E}$,
$\boldsymbol{\alpha}\in\mathbf{R}$ in the hyperplanes perpendicular
to the roots (so for $\mathbf{x}\in\mathbf{E}$ one has that
$r_{\boldsymbol{\alpha}}
(\mathbf{x})=\mathbf{x}-\langle\mathbf{x},\boldsymbol{\alpha}^\vee\rangle
\boldsymbol{\alpha}$). Clearly $\| w\mathbf{x} \|^2=\langle
w\mathbf{x},w\mathbf{x}\rangle=\langle
\mathbf{x},\mathbf{x}\rangle=\| \mathbf{x}\|^2$ for all
 $w\in W$ and $\mathbf{x}\in\mathbf{E}$.

The algebra $\boldsymbol{A}_{\mathbf{R}}$ of Weyl-group invariant
trigonometric polynomials on the torus
$\mathbb{T}_{\mathbf{R}}=\mathbf{E}/(2\pi \mathcal{Q}^\vee)$ (where
$\mathcal{Q}^\vee:=\text{Span}_{\mathbb{Z}}(\mathbf{R}^\vee)$) is
spanned by the basis of the symmetric monomials
\begin{equation}
m_{\boldsymbol{\lambda}} (\mathbf{x}) =
\frac{1}{|W_{\boldsymbol{\lambda}}|} \sum_{w\in W} e^{i\langle
\boldsymbol{\lambda} ,\mathbf{x}_w\rangle }, \qquad
\boldsymbol{\lambda}\in\mathcal{P}_+ .
\end{equation}
Here $|W_{\boldsymbol{\lambda}}|$ represents the order of stabilizer
subgroup $W_{\boldsymbol{\lambda}} := \{ w\in W\mid
w(\boldsymbol{\lambda} )=\boldsymbol{\lambda} \}$ and
$\mathbf{x}_w:=w(\mathbf{x})$. We endow
$\boldsymbol{A}_{\mathbf{R}}$ with the following inner product
structure
\begin{subequations}
\begin{equation}
\langle f,g\rangle_{\Delta}=  \frac{1}{|W|\text{Vol}(\mathbb{T}_{\mathbf{R}})}
\int_{\mathbb{T}_{\mathbf{R}}} f(\mathbf{x})\overline{g(\mathbf{x})}
\Delta (\mathbf{x})\text{d}\mathbf{x} \qquad (f,g\in \boldsymbol{A}_{\mathbf{R}}),
\end{equation}
associated to a $W$-invariant nonnegative weight function that
factorizes over the root system:
\begin{eqnarray}
\Delta (\mathbf{x}) &=& \frac{|\delta
(\mathbf{x})|^2}{C(\mathbf{x})C(-\mathbf{x})} ,\\
  \delta
(\mathbf{x})&= &\prod_{\boldsymbol{\alpha}\in\mathbf{R}_+} \bigl(
e^{i\langle \boldsymbol{\alpha}
,\mathbf{x}\rangle /2}-e^{-i\langle \boldsymbol{\alpha} ,\mathbf{x}\rangle /2} \bigr),\\
 C(\mathbf{x}) &=&
\prod_{\boldsymbol{\alpha}\in\mathbf{R}^{(s)}_+}\!\!
c^{(s)}(e^{-i\langle \boldsymbol{\alpha} ,\mathbf{x}\rangle})
\prod_{\boldsymbol{\alpha}\in\mathbf{R}^{(l)}_+}\!\!
c^{(l)}(e^{-i\langle \boldsymbol{\alpha} ,\mathbf{x}\rangle})
,\makebox[1em]{}\label{cf1}
\end{eqnarray}
where
\begin{equation}
 c^{(s)}(z)=\prod_{m=1}^{M^{(s)}}
(1+t_m^{(s)}z), \qquad c^{(l)}(z)=\prod_{m=1}^{M^{(l)}}
(1+t_m^{(l)}z), \label{cf2}
\end{equation}
\end{subequations}
and with the parameters $t_m^{(s)}$ ($m=1,\ldots ,M^{(s)}$) and
$t_m^{(l)}$ ($m=1,\ldots ,M^{(l)}$) taken from $(-1,1)\setminus \{
0\}$. Here $|W|$ denotes the order of the Weyl group $W$,
$\text{Vol}(\mathbb{T}_{\mathbf{R}}):=\int_{\mathbb{T}_{\mathbf{R}}}
\text{d}\mathbf{x}$, and $\mathbf{R}^{(s)}_+:=\mathbf{R}^{(s)}\cap
\mathbf{R}_+$, $\mathbf{R}^{(l)}_+:=\mathbf{R}^{(l)}\cap
\mathbf{R}_+$, where $\mathbf{R}^{(s)}$ and $\mathbf{R}^{(l)}$ refer
to the \textit{short roots} and the \textit{long roots} of
$\mathbf{R}$, respectively (with the convention that all roots are
short, say, if $\mathbf{R}$ is simply-laced).

The Bernstein-Szeg\"o polynomials associated to the root system
$\mathbf{R}$ are now defined as the polynomials obtained from the
symmetric monomials $m_{\boldsymbol{\lambda}} (\mathbf{x})$,
$\boldsymbol{\lambda}\in\mathcal{P}_+$ by projecting away the
components in the finite-dimensional subspace spanned by monomials
corresponding to dominant weights that are smaller than
$\boldsymbol{\lambda}$ in the (partial) \textit{dominance ordering}
\begin{equation}\label{do}
\boldsymbol{\mu}\preceq\boldsymbol{\lambda}\quad \text{iff}\quad
\boldsymbol{\lambda}-\boldsymbol{\mu}\in\mathcal{Q}_+.
\end{equation}

\begin{definition}
The \textit{(monic) Bernstein-Szeg\"o polynomials}
$p_{\boldsymbol{\lambda}} (\mathbf{x})$,
$\boldsymbol{\lambda}\in\mathcal{P}_+$ are the polynomials of the
form
\begin{subequations}
\begin{equation}\label{bs1}
 p_{\boldsymbol{\lambda}} (\mathbf{x}) = \sum_{\boldsymbol{\mu}\in\mathcal{P}^+,\, \boldsymbol{\mu}
 \preceq
\boldsymbol{\lambda}} a_{\boldsymbol{\lambda} \boldsymbol{\mu}}
m_{\boldsymbol{\mu}} (\mathbf{x}) ,
\end{equation}
with expansion coefficients $a_{\boldsymbol{\lambda}
\boldsymbol{\mu}} \in\mathbb{C}$ such that $a_{\boldsymbol{\lambda}
\boldsymbol{\lambda}}=1$ and
\begin{equation}\label{bs2}
\langle p_{\boldsymbol{\lambda}} , m_{\boldsymbol{\mu}}
\rangle_{\Delta} = 0\quad \text{for}\ \boldsymbol{\mu} \prec
\boldsymbol{\lambda} .
\end{equation}
\end{subequations}
\end{definition}

It is clear that for any $\boldsymbol{\lambda}\in\mathcal{P}_+$ the
properties in Eqs. \eqref{bs1}, \eqref{bs2} determine
$p_{\boldsymbol{\lambda}} (\mathbf{x})$ uniquely. The main result of
this paper is an explicit formula for the Bernstein-Szeg\"o
polynomials for weights $\boldsymbol{\lambda}$ sufficiently deep in
the dominant cone $\mathcal{P}_+$. To formulate the precise result
we introduce the quantities
\begin{equation}
m^{(s)}(\boldsymbol{\lambda} ) =
\min_{\boldsymbol{\alpha}\in\mathbf{R}^{(s)}_+} \langle
\boldsymbol{\lambda} ,\boldsymbol{\alpha}^\vee\rangle \quad
\text{and}\quad m^{(l)}(\boldsymbol{\lambda} ) =
\min_{\boldsymbol{\alpha}\in\mathbf{R}^{(l)}_+} \langle
\boldsymbol{\lambda} ,\boldsymbol{\alpha}^\vee\rangle ,
\end{equation}
which measure the distance of the dominant weight
$\boldsymbol{\lambda}$ to the walls $\{
\boldsymbol{\mu}\in\mathcal{P}_+\mid \exists
\boldsymbol{\alpha}\in\mathbf{R}_+\ \text{such\ that}\ \langle
\boldsymbol{\mu},\boldsymbol{\alpha}^\vee\rangle =0\}$ bounding the
dominant cone. For future reference we will also single out the
special dominant weights given by the half-sums of the positive
roots:
\begin{equation}\label{rho}
\boldsymbol{\rho}:=\frac{1}{2}\sum_{\boldsymbol{\alpha}\in\mathbf{R}_+}\boldsymbol{\alpha},
\qquad
\boldsymbol{\rho}^{(s)}:=\frac{1}{2}\sum_{\boldsymbol{\alpha}\in\mathbf{R}_+^{(s)}}\boldsymbol{\alpha}
, \qquad
\boldsymbol{\rho}^{(l)}:=\frac{1}{2}\sum_{\boldsymbol{\alpha}\in\mathbf{R}_+^{(l)}}\boldsymbol{\alpha}
.
\end{equation}

\begin{definition}
Let us call a weight $\boldsymbol{\lambda}\in\mathcal{P}_+$
\textit{sufficiently deep} in the dominant cone iff
\begin{equation}
m^{(s)}(\boldsymbol{\lambda})\geq M^{(s)}-1\quad \text{and}\quad
m^{(l)}(\boldsymbol{\lambda})\geq M^{(l )}-1
\end{equation}
(where $M^{(s)}$ and $M^{(l)}$ refer to the degrees of $c^{(s)}(z)$
and $c^{(l)}(z)$ in Eq. \eqref{cf2}, respectively).
\end{definition}

\begin{theorem}[Explicit Formula]\label{form:thm}
For $\boldsymbol{\lambda}\in\mathcal{P}_+$ sufficiently deep in the
dominant cone, the monic Bernstein-Szeg\"o polynomial
$p_{\boldsymbol{\lambda}} (\mathbf{x})$ \eqref{bs1}, \eqref{bs2} is
given explicitly by
\begin{subequations}
\begin{equation}\label{exp}
p_{\boldsymbol{\lambda}} (\mathbf{x}) =
\mathcal{N}_{\boldsymbol{\lambda}}^{-1}P_{\boldsymbol{\lambda}}
(\mathbf{x})\quad\text{with}\quad P_{\boldsymbol{\lambda}}
(\mathbf{x}) =\frac{1}{ \delta (\mathbf{x})} \sum_{w\in W} (-1)^w
C(\mathbf{x}_w) e^{i\langle \boldsymbol{\rho}+\boldsymbol{\lambda}
,\mathbf{x}_w\rangle} ,
\end{equation}
where $C(\mathbf{x})$ is taken from Eqs. \eqref{cf1},\eqref{cf2} and
$(-1)^w:=\det (w)$. Here the normalization constant is of the form
\begin{equation}\label{lc}
\mathcal{N}_{\boldsymbol{\lambda}} =
\prod_{\begin{subarray}{c}\boldsymbol{\alpha}\in\mathbf{R}_+^{(s)} \\
\langle \tilde{\boldsymbol{\lambda}},\boldsymbol{\alpha}^\vee\rangle
= 0
\end{subarray}}
\frac{1-\mathbf{t}_{s}^{1+\text{ht}_{s}(\boldsymbol{\alpha})}
\mathbf{t}_{l}^{\text{ht}_{l}(\boldsymbol{\alpha})}}
{1-\mathbf{t}_{s}^{\text{ht}_{s}(\boldsymbol{\alpha})}
\mathbf{t}_{l}^{\text{ht}_{l}(\boldsymbol{\alpha})}}
\prod_{\begin{subarray}{c}\boldsymbol{\alpha}\in\mathbf{R}_+^{(l)} \\
\langle \tilde{\boldsymbol{\lambda}},\boldsymbol{\alpha}^\vee\rangle
= 0
\end{subarray}}
\frac{1-\mathbf{t}_{s}^{\text{ht}_{s}(\boldsymbol{\alpha})}
\mathbf{t}_{l}^{1+\text{ht}_{l}(\boldsymbol{\alpha})}}
{1-\mathbf{t}_{s}^{\text{ht}_{s}(\boldsymbol{\alpha})}
\mathbf{t}_{l}^{\text{ht}_{l}(\boldsymbol{\alpha})}} ,
\end{equation}
\end{subequations}
where
$\tilde{\boldsymbol{\lambda}}:=\boldsymbol{\lambda}+\boldsymbol{\rho}
-M^{(s)}\boldsymbol{\rho}^{(s)}-M^{(l)}\boldsymbol{\rho}^{(l)}$,
$\mathbf{t}_s:=-t_1^{(s)}\cdots t_{M^{(s)}}^{(s)}$,
$\mathbf{t}_l:=-t_1^{(l)}\cdots t_{M^{(l)}}^{(l)}$,
$\text{ht}_{s}(\boldsymbol{\alpha}):=
\sum_{\boldsymbol{\beta}\in\mathbf{R}_+^{(s)}} \langle
\boldsymbol{\alpha},\boldsymbol{\beta}^\vee \rangle /2$ and
$\text{ht}_{l}(\boldsymbol{\alpha}):=
\sum_{\boldsymbol{\beta}\in\mathbf{R}_+^{(l)}} \langle
\boldsymbol{\alpha},\boldsymbol{\beta}^\vee \rangle /2$ (and with
the convention that empty products are equal to {\em one}).
\end{theorem}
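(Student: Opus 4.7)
The plan is to verify that $P_{\boldsymbol\lambda}(\mathbf{x})$ as defined by \eqref{exp} satisfies two properties that, together with the monic normalization, determine the Bernstein--Szeg\"o polynomial uniquely: (a) a dominance-triangular monomial expansion $P_{\boldsymbol\lambda}=\mathcal{N}_{\boldsymbol\lambda}\,m_{\boldsymbol\lambda}+\sum_{\boldsymbol\mu\prec\boldsymbol\lambda}b_{\boldsymbol\lambda\boldsymbol\mu}\,m_{\boldsymbol\mu}$ with leading coefficient $\mathcal{N}_{\boldsymbol\lambda}$ exactly as in \eqref{lc}, and (b) the orthogonality $\langle P_{\boldsymbol\lambda}, m_{\boldsymbol\mu}\rangle_{\Delta}=0$ for every $\boldsymbol\mu\prec\boldsymbol\lambda$. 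Uniqueness in \eqref{bs1}--\eqref{bs2} then forces $p_{\boldsymbol\lambda}=\mathcal{N}_{\boldsymbol\lambda}^{-1} P_{\boldsymbol\lambda}$.

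For (b), I would collapse the Weyl sum in $P_{\boldsymbol\lambda}$ by substituting $\mathbf{x}\mapsto w^{-1}\mathbf{x}$ in the $w$-th integrand. Two signs $(-1)^w$ --- one from the Weyl summand, one from $\delta(w^{-1}\mathbf{x})=(-1)^w\delta(\mathbf{x})$ --- cancel, while $\Delta$ and $m_{\boldsymbol\mu}$ are $W$-invariant, producing $|W|$ copies of a single integral. Using $\Delta=\delta\,\overline\delta/(C(\mathbf{x})C(-\mathbf{x}))$ to cancel $\delta(\mathbf{x})C(\mathbf{x})$, one arrives at
\begin{equation*}
 \langle P_{\boldsymbol\lambda},m_{\boldsymbol\mu}\rangle_\Delta = \frac{1}{\text{Vol}(\mathbb{T}_{\mathbf{R}})}\int_{\mathbb{T}_{\mathbf{R}}} \frac{\overline{\delta(\mathbf{x})}\,e^{i\langle\boldsymbol\rho+\boldsymbol\lambda,\mathbf{x}\rangle}}{C(-\mathbf{x})}\,\overline{m_{\boldsymbol\mu}(\mathbf{x})}\,d\mathbf{x}.
\end{equation*}
Since $|t_m^{(\cdot)}|<1$, the factor $1/C(-\mathbf{x})$ admits an absolutely convergent Fourier expansion with exponents in $\mathcal{Q}_+$; combined with the Weyl-denominator expansion $\overline{\delta(\mathbf{x})}=\sum_{w'}(-1)^{w'}e^{-i\langle w'\boldsymbol\rho,\mathbf{x}\rangle}$ and the orbit expansion of $\overline{m_{\boldsymbol\mu}(\mathbf{x})}$, Fourier orthogonality on $\mathbb{T}_{\mathbf{R}}$ reduces the integral to requiring $\boldsymbol\nu=(w'\boldsymbol\rho-\boldsymbol\rho)+(w''\boldsymbol\mu-\boldsymbol\mu)+(\boldsymbol\mu-\boldsymbol\lambda)$ to lie in $\mathcal{Q}_+$ for some $w',w''\in W$. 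The three summands lie in $-\mathcal{Q}_+$, $-\mathcal{Q}_+$, and $-\mathcal{Q}_+\setminus\{0\}$ respectively (the last strict since $\boldsymbol\mu\prec\boldsymbol\lambda$), so $\boldsymbol\nu\in -\mathcal{Q}_+\setminus\{0\}$, which is disjoint from $\mathcal{Q}_+$; the integral therefore vanishes.

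For (a), I would expand $C(\mathbf{x})=\sum_{\boldsymbol\kappa}c_{\boldsymbol\kappa}\,e^{-i\langle\boldsymbol\kappa,\mathbf{x}\rangle}$ as a finite polynomial supported on a bounded subset of $\mathcal{Q}_+$ controlled by $M^{(s)},M^{(l)}$. Substituting and interchanging summations yields
\begin{equation*}
P_{\boldsymbol\lambda}(\mathbf{x}) =\sum_{\boldsymbol\kappa}c_{\boldsymbol\kappa}\,\frac{1}{\delta(\mathbf{x})}\sum_{w\in W}(-1)^w e^{i\langle w(\boldsymbol\rho+\boldsymbol\lambda-\boldsymbol\kappa),\mathbf{x}\rangle}.
\end{equation*}
By the Weyl character formula, each inner sum equals $(-1)^{w_{\boldsymbol\kappa}}\delta(\mathbf{x})\,\chi_{\boldsymbol\mu_{\boldsymbol\kappa}}(\mathbf{x})$ when $\boldsymbol\rho+\boldsymbol\lambda-\boldsymbol\kappa$ is regular, where $\boldsymbol\rho+\boldsymbol\mu_{\boldsymbol\kappa}=w_{\boldsymbol\kappa}(\boldsymbol\rho+\boldsymbol\lambda-\boldsymbol\kappa)$ is its dominant $W$-conjugate; otherwise the sum vanishes. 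Since the Weyl characters are themselves triangular, $\chi_{\boldsymbol\mu}=m_{\boldsymbol\mu}+\sum_{\boldsymbol\nu\prec\boldsymbol\mu}\ast\,m_{\boldsymbol\nu}$, this exhibits $P_{\boldsymbol\lambda}$ as a linear combination of monomials. A direct coweight-pairing verification using the sufficiently-deep bound $\langle\boldsymbol\lambda,\boldsymbol\alpha^\vee\rangle\geq M^{(\cdot)}-1$ (together with the height bounds on $\boldsymbol\kappa$ coming from the support of $C$) then confirms that every $\boldsymbol\mu_{\boldsymbol\kappa}$ satisfies $\boldsymbol\mu_{\boldsymbol\kappa}\preceq\boldsymbol\lambda$, so the required triangularity holds.

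The main obstacle is the explicit product evaluation of the leading coefficient $\mathcal{N}_{\boldsymbol\lambda}=\sum_{\boldsymbol\kappa}(-1)^{w_{\boldsymbol\kappa}}c_{\boldsymbol\kappa}$, the sum extending over those $\boldsymbol\kappa$ with $\boldsymbol\rho+\boldsymbol\lambda-\boldsymbol\kappa\in W(\boldsymbol\rho+\boldsymbol\lambda)$. Reparametrizing by the Weyl element $w_{\boldsymbol\kappa}$ realizing this $W$-conjugacy, one identifies the admissible $w_{\boldsymbol\kappa}$'s with the parabolic stabilizer subgroup of the shifted weight $\tilde{\boldsymbol\lambda}=\boldsymbol\lambda+\boldsymbol\rho-M^{(s)}\boldsymbol\rho^{(s)}-M^{(l)}\boldsymbol\rho^{(l)}$, the shift $M^{(s)}\boldsymbol\rho^{(s)}+M^{(l)}\boldsymbol\rho^{(l)}$ being the ``center of mass'' of the $\boldsymbol\kappa$-support of $C$. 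The task then reduces to recognizing the alternating sum as a Poincar\'e-type generating function over this parabolic subgroup, whose exponents are controlled by the short and long heights $\text{ht}_s(\boldsymbol\alpha),\text{ht}_l(\boldsymbol\alpha)$ of the roots in the stabilizer's root subsystem, and collapsing it via a Macdonald-type factorization identity into the product \eqref{lc}. This parabolic Poincar\'e-series evaluation is the principal combinatorial step of the proof.
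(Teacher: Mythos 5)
Your strategy coincides with the paper's own: triangularity of $P_{\boldsymbol\lambda}$ via the Weyl character formula, vanishing of $\langle P_{\boldsymbol\lambda},m_{\boldsymbol\mu}\rangle_\Delta$ for $\boldsymbol\mu\prec\boldsymbol\lambda$ by the same constant-term computation (expanding $1/C(-\mathbf{x})$ in a Fourier series supported on $\mathcal{Q}_+$ and invoking $w\boldsymbol\mu\preceq\boldsymbol\mu\prec\boldsymbol\lambda$), and evaluation of the leading coefficient as a two-parameter Poincar\'e series of the stabilizer $W_{\tilde{\boldsymbol\lambda}}$ collapsed by Macdonald's product formula. Step (b) is essentially complete.

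There are, however, two genuine gaps, and they sit exactly where the paper does its real work (Section \ref{sec5}). First, your claim that a ``direct coweight-pairing verification'' yields $\boldsymbol\mu_{\boldsymbol\kappa}\preceq\boldsymbol\lambda$ does not hold up: the weight $\boldsymbol\rho+\boldsymbol\lambda-\boldsymbol\kappa$ is in general not dominant, so you have no direct handle on its dominant $W$-conjugate, and pairing against fundamental coweights produces inequalities contaminated by terms such as $\langle\boldsymbol\kappa,\boldsymbol\beta^\vee\rangle$, which can well exceed the sufficiently-deep bound. The paper proves this statement as Proposition \ref{saturated:prp}, and the proof is genuinely geometric: it combines Kac's characterization of saturated weight sets as $\text{Conv}(W(\boldsymbol\lambda))\cap(\boldsymbol\lambda+\mathcal{Q})$ (Lemma \ref{kac:lem}) with an explicit zonotope description of $\text{Conv}(W(a\boldsymbol\rho^{(s)}+b\boldsymbol\rho^{(l)}))$ (Lemma \ref{ch2:lem}) and a translation lemma for convex hulls of orbits. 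Second, your identification of the admissible $\boldsymbol\kappa$ --- those with $\boldsymbol\rho+\boldsymbol\lambda-\boldsymbol\kappa\in W(\boldsymbol\rho+\boldsymbol\lambda)$ --- with the sets $\mathbf{S}_w$, $w\in W_{\tilde{\boldsymbol\lambda}}$, is asserted via a ``center of mass'' heuristic; in the paper this is the content of Propositions \ref{int:prp} and \ref{bound:prp}, proved by a vertex analysis of the two convex hulls above together with a norm computation (Lemma \ref{bound:lem}) showing that $\boldsymbol\mu+w_1(\boldsymbol\nu)\in W(\boldsymbol\mu+\boldsymbol\nu)$ forces $w_1\in W_{\boldsymbol\mu}$ when $\boldsymbol\nu$ is strongly dominant. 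Until these two convexity statements are supplied, both the triangularity (hence the very comparison with $p_{\boldsymbol\lambda}$) and the value of the leading coefficient remain unestablished; everything downstream of them in your outline is fine.
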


It is immediate from the definition that the Bernstein-Szeg\"o
polynomials are orthogonal when corresponding to weights that are
comparable in the dominance ordering \eqref{do}. The following
theorem states that the orthogonality holds in fact also for
non-comparable weights, assuming at least one of them lies
sufficiently deep in the dominant cone.

\begin{theorem}[Orthogonality]\label{orth:thm}
When at least one of
$\boldsymbol{\lambda},\boldsymbol{\mu}\in\mathcal{P}_+$ lies
sufficiently deep in the dominant cone, the Bernstein-Szeg\"o
polynomials \eqref{bs1},\eqref{bs2} are orthogonal
\begin{equation}\label{orth:eq}
\langle p_{\boldsymbol{\lambda}}
,p_{\boldsymbol{\mu}}\rangle_{\Delta} = 0 \quad\text{if}\quad
\boldsymbol{\mu} \neq \boldsymbol{\lambda} .
\end{equation}
\end{theorem}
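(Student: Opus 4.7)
By the Hermitian property $\langle p_{\boldsymbol{\lambda}}, p_{\boldsymbol{\mu}}\rangle_{\Delta} = \overline{\langle p_{\boldsymbol{\mu}}, p_{\boldsymbol{\lambda}}\rangle_{\Delta}}$, I assume without loss of generality that $\boldsymbol{\lambda}$ is the weight lying sufficiently deep in the dominant cone. If $\boldsymbol{\lambda}$ and $\boldsymbol{\mu}$ are comparable in $\preceq$, the orthogonality is immediate from the definition: expanding the polynomial attached to the smaller weight in the monomial basis yields only $m_{\boldsymbol{\nu}}$ with $\boldsymbol{\nu}$ strictly below the larger weight, and each such monomial is orthogonal to the $p$ of the larger weight by Eq.~\eqref{bs2}. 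All content of the theorem therefore resides in the incomparable case. Expanding $p_{\boldsymbol{\mu}} = \sum_{\boldsymbol{\nu}\preceq\boldsymbol{\mu}} a_{\boldsymbol{\mu}\boldsymbol{\nu}} m_{\boldsymbol{\nu}}$, it suffices to show $\langle p_{\boldsymbol{\lambda}}, m_{\boldsymbol{\nu}}\rangle_{\Delta} = 0$ for every $\boldsymbol{\nu}\preceq\boldsymbol{\mu}$ whenever $\boldsymbol{\lambda}$ and $\boldsymbol{\mu}$ are incomparable.

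To this end I substitute the explicit formula of Theorem~\ref{form:thm} for $p_{\boldsymbol{\lambda}}$ into the integral. The denominator $\delta$ of $P_{\boldsymbol{\lambda}}$ cancels against one factor of $\delta$ in $|\delta|^2 = \delta\,\overline{\delta}$, leaving $\overline{\delta}/(C(\mathbf{x})C(-\mathbf{x}))$, with $\overline{\delta}$ $W$-antiinvariant and $C(\mathbf{x})C(-\mathbf{x})$, $m_{\boldsymbol{\nu}}$ both $W$-invariant. The classical antisymmetrization trick---changing the variable $\mathbf{x}\mapsto\mathbf{x}_{w^{-1}}$ in the $w$-th summand so that the sign from $\overline{\delta}(\mathbf{x}_{w^{-1}}) = (-1)^w\overline{\delta}(\mathbf{x})$ cancels the $(-1)^w$ carried by $P_{\boldsymbol{\lambda}}$---collapses the sum into $|W|$ identical copies of a single integral and produces
\[
\langle p_{\boldsymbol{\lambda}}, m_{\boldsymbol{\nu}}\rangle_{\Delta} = \frac{\mathcal{N}_{\boldsymbol{\lambda}}^{-1}}{\text{Vol}(\mathbb{T}_{\mathbf{R}})}\int_{\mathbb{T}_{\mathbf{R}}} \frac{\overline{\delta(\mathbf{y})}\, e^{i\langle\boldsymbol{\rho}+\boldsymbol{\lambda},\mathbf{y}\rangle}}{C(-\mathbf{y})}\, \overline{m_{\boldsymbol{\nu}}(\mathbf{y})}\, d\mathbf{y}.
\]

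The right-hand side is, up to the volume prefactor, the constant Fourier coefficient of the integrand. Since $|t_m^{(s)}|, |t_m^{(l)}| < 1$, the geometric expansion $(1+tz)^{-1} = \sum_{k\geq 0}(-t)^k z^k$ converges on $\mathbb{T}_{\mathbf{R}}$ and yields the absolutely convergent series $1/C(-\mathbf{y}) = \sum_{\boldsymbol{\xi}\in\mathcal{Q}_+} d_{\boldsymbol{\xi}}\, e^{i\langle\boldsymbol{\xi},\mathbf{y}\rangle}$ with \emph{all} exponents supported in the nonnegative semigroup $\mathcal{Q}_+$. Combining this with the Weyl-denominator expansion $\overline{\delta(\mathbf{y})} = \sum_{w\in W} (-1)^w e^{-i\langle w\boldsymbol{\rho},\mathbf{y}\rangle}$ and $\overline{m_{\boldsymbol{\nu}}(\mathbf{y})} = |W_{\boldsymbol{\nu}}|^{-1}\sum_{w'\in W} e^{-i\langle w'\boldsymbol{\nu},\mathbf{y}\rangle}$, a nonvanishing contribution to the constant term requires $w'\boldsymbol{\nu} = \boldsymbol{\lambda} + \boldsymbol{\xi} + (\boldsymbol{\rho}-w\boldsymbol{\rho})$ for some $w,w'\in W$ and $\boldsymbol{\xi}\in\mathcal{Q}_+$. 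Invoking the standard identity $\boldsymbol{\rho} - w\boldsymbol{\rho} = \sum_{\boldsymbol{\alpha}\in\mathbf{R}_+\cap w^{-1}\mathbf{R}_-}\boldsymbol{\alpha}\in\mathcal{Q}_+$, this forces $w'\boldsymbol{\nu}\succeq\boldsymbol{\lambda}$, and since $\boldsymbol{\nu}\in\mathcal{P}_+$ implies $w'\boldsymbol{\nu}\preceq\boldsymbol{\nu}$ in the dominance order, one obtains $\boldsymbol{\nu}\succeq\boldsymbol{\lambda}$. Together with $\boldsymbol{\nu}\preceq\boldsymbol{\mu}$ this gives $\boldsymbol{\lambda}\preceq\boldsymbol{\mu}$, contradicting incomparability. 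Hence $\langle p_{\boldsymbol{\lambda}}, m_{\boldsymbol{\nu}}\rangle_{\Delta} = 0$ for every $\boldsymbol{\nu}\preceq\boldsymbol{\mu}$, and the orthogonality follows. I expect the trickiest step to be the antisymmetrization in the second paragraph---correctly bookkeeping the various signs and $W$-(anti)invariances---rather than the concluding dominance-order argument, which is routine.
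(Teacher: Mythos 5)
Your proof is correct and follows essentially the same route as the paper: the antisymmetrization collapse, the Taylor expansion of $1/C(-\mathbf{x})$ over $\mathcal{Q}_+$, and the constant-term/dominance analysis are exactly the content of the paper's Partial Biorthogonality proposition (stated there for the unnormalized $P_{\boldsymbol{\lambda}}$ and arbitrary $\boldsymbol{\lambda}$), which the paper then combines with triangularity and the Hermitian symmetry just as you do. The only difference is organizational: the paper packages the key integral computation as a standalone proposition reused for the norm formula, whereas you inline it via the explicit formula of Theorem~\ref{form:thm}.
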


Our final result provides an explicit formula for the quadratic
norm of the Bernstein-Szeg\"o polynomials corresponding to weights
sufficiently deep in the dominant cone.

\begin{theorem}[Norm Formula]\label{norm:thm}
For $\boldsymbol{\lambda}\in\mathcal{P}_+$ sufficiently deep in the
dominant cone, the quadratic norm of the monic Bernstein-Szeg\"o
polynomial \eqref{bs1},\eqref{bs2} is given by
\begin{equation}
\langle p_{\boldsymbol{\lambda}}
,p_{\boldsymbol{\lambda}}\rangle_{\Delta} =
\mathcal{N}_{\boldsymbol{\lambda}}^{-1}
\end{equation}
(with $\mathcal{N}_{\boldsymbol{\lambda}}$ given by Eq. \eqref{lc}).
\end{theorem}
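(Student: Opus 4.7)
The plan is to reduce $\langle p_{\boldsymbol{\lambda}},p_{\boldsymbol{\lambda}}\rangle_\Delta=\mathcal{N}_{\boldsymbol{\lambda}}^{-1}$ to a single Fourier constant term that can be read off by a dominance argument, with most of the structural work already done by Theorem~\ref{form:thm}. I would first exploit that $p_{\boldsymbol{\lambda}}$ is monic in $m_{\boldsymbol{\lambda}}$ with $\langle p_{\boldsymbol{\lambda}},m_{\boldsymbol{\mu}}\rangle_\Delta=0$ for all $\boldsymbol{\mu}\prec\boldsymbol{\lambda}$, which is built into its very definition \eqref{bs2}. Expanding $p_{\boldsymbol{\lambda}}=m_{\boldsymbol{\lambda}}+\sum_{\boldsymbol{\mu}\prec\boldsymbol{\lambda}}a_{\boldsymbol{\lambda}\boldsymbol{\mu}}m_{\boldsymbol{\mu}}$ in the second slot of $\langle p_{\boldsymbol{\lambda}},p_{\boldsymbol{\lambda}}\rangle_\Delta$ therefore collapses the sum to $\langle p_{\boldsymbol{\lambda}},m_{\boldsymbol{\lambda}}\rangle_\Delta$, and substituting $p_{\boldsymbol{\lambda}}=\mathcal{N}_{\boldsymbol{\lambda}}^{-1}P_{\boldsymbol{\lambda}}$ via Theorem~\ref{form:thm} reduces the claim to the cleaner identity $\langle P_{\boldsymbol{\lambda}},m_{\boldsymbol{\lambda}}\rangle_\Delta=1$.

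Next I would convert this pairing to a constant-term identity. Since $P_{\boldsymbol{\lambda}}\Delta$ is $W$-invariant, expanding $\overline{m_{\boldsymbol{\lambda}}}(\mathbf{x})=|W_{\boldsymbol{\lambda}}|^{-1}\sum_{\sigma\in W}e^{-i\langle\boldsymbol{\lambda},\mathbf{x}_\sigma\rangle}$ and applying the change of variables $\mathbf{x}\mapsto\sigma^{-1}\mathbf{x}$ in each summand gives $\langle P_{\boldsymbol{\lambda}},m_{\boldsymbol{\lambda}}\rangle_\Delta = |W_{\boldsymbol{\lambda}}|^{-1}[P_{\boldsymbol{\lambda}}\,e^{-i\langle\boldsymbol{\lambda},\mathbf{x}\rangle}\Delta]_0$, where $[\,\cdot\,]_0$ extracts the constant Fourier coefficient on $\mathbb{T}_{\mathbf{R}}$. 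Writing $P_{\boldsymbol{\lambda}}\Delta=A_{\boldsymbol{\lambda}}\bar{\delta}/(C(\mathbf{x})C(-\mathbf{x}))$ with $A_{\boldsymbol{\lambda}}(\mathbf{x})=\sum_w(-1)^w C(\mathbf{x}_w)e^{i\langle\boldsymbol{\rho}+\boldsymbol{\lambda},\mathbf{x}_w\rangle}$, unfolding the $w$-sum by $\mathbf{x}\mapsto w^{-1}\mathbf{x}$ (using the $W$-invariance of $C(\mathbf{x})C(-\mathbf{x})$: each positive-root factor $c(e^{-i\boldsymbol{\alpha}(\mathbf{x})})$ gets paired with its companion $c(e^{i\boldsymbol{\alpha}(\mathbf{x})})$ across Weyl images), and expanding $\bar{\delta}(\mathbf{x})=\sum_u(-1)^u e^{-i\langle u\boldsymbol{\rho},\mathbf{x}\rangle}$, this transforms into
\[
\langle P_{\boldsymbol{\lambda}},m_{\boldsymbol{\lambda}}\rangle_\Delta = \frac{1}{|W_{\boldsymbol{\lambda}}|}\sum_{w,u\in W}(-1)^u\bigl[\,e^{i\langle\boldsymbol{\rho}+\boldsymbol{\lambda}-w\boldsymbol{\lambda}-u\boldsymbol{\rho},\mathbf{x}\rangle}/C(-\mathbf{x})\,\bigr]_0.
\]

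Finally I would evaluate each constant term using the geometric-series expansion $1/C(-\mathbf{x})=\sum_{\boldsymbol{\nu}\in\mathcal{Q}_+}d_{\boldsymbol{\nu}}e^{i\langle\boldsymbol{\nu},\mathbf{x}\rangle}$, which converges because $|t_m^{(s)}|,|t_m^{(l)}|<1$ and has $d_0=1$. The displayed constant term is nonzero only when $\boldsymbol{\mu}:=\boldsymbol{\rho}+\boldsymbol{\lambda}-w\boldsymbol{\lambda}-u\boldsymbol{\rho}$ satisfies $-\boldsymbol{\mu}\in\mathcal{Q}_+$. Writing $-\boldsymbol{\mu}=(u\boldsymbol{\rho}-\boldsymbol{\rho})+(w\boldsymbol{\lambda}-\boldsymbol{\lambda})$ and observing that both summands lie in $-\mathcal{Q}_+$---the first via the standard identity $\boldsymbol{\rho}-u\boldsymbol{\rho}=\sum_{\boldsymbol{\alpha}\in\mathbf{R}_+\cap u^{-1}\mathbf{R}_-}\boldsymbol{\alpha}$, the second by dominance of $\boldsymbol{\lambda}$---the pointedness $\mathcal{Q}_+\cap(-\mathcal{Q}_+)=\{0\}$ forces both summands to vanish. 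Thus $u\boldsymbol{\rho}=\boldsymbol{\rho}$ (so $u=e$ by regularity of $\boldsymbol{\rho}$) and $w\in W_{\boldsymbol{\lambda}}$; for these contributions $\boldsymbol{\mu}=0$ and the constant term equals $d_0=1$. The $|W_{\boldsymbol{\lambda}}|$ surviving unit contributions cancel the prefactor to yield $\langle P_{\boldsymbol{\lambda}},m_{\boldsymbol{\lambda}}\rangle_\Delta=1$ as required.

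The only real obstacle is setting up the constant-term reformulation correctly, namely verifying the $W$-invariance of $C(\mathbf{x})C(-\mathbf{x})$ and carefully handling the interplay of the two antisymmetrizations (from $A_{\boldsymbol{\lambda}}$ and $\bar{\delta}$) under the requisite changes of variable. Once this bookkeeping is in place, the dominance argument in the final step closes the calculation cleanly, with the detailed product structure of $\mathcal{N}_{\boldsymbol{\lambda}}$ entering only implicitly through its appearance in the explicit formula of Theorem~\ref{form:thm}.
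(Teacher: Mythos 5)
Your argument is correct and takes essentially the same route as the paper: both proofs reduce the norm to the single pairing $\langle P_{\boldsymbol{\lambda}},m_{\boldsymbol{\lambda}}\rangle_\Delta=1$ via the explicit formula of Theorem~\ref{form:thm} together with the defining (bi)orthogonality, and your constant-term evaluation of that pairing is precisely the computation carried out in the paper's Proposition~\ref{orto:prp} (specialized to $\boldsymbol{\mu}=\boldsymbol{\lambda}$), which the paper then simply cites.
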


For $M^{(s)}=M^{(l)}=0$ the above Bernstein-Szeg\"o polynomials boil
down to the Weyl characters
$\chi_{\boldsymbol{\lambda}}(\mathbf{x})$,
$\boldsymbol{\lambda}\in\mathcal{P}_+$ of the irreducible
representations of simple Lie algebras; and for $M^{(s)}=M^{(l)}=1$
they amount to Macdonald's Hall-Littlewood polynomials associated
with root systems. In these two simplest cases the contents of
Theorems \ref{form:thm}, \ref{orth:thm} and \ref{norm:thm} is
well-known from the representation theory of simple Lie algebras
\cite{hum:introduction} and from Macdonald's seminal work on the
zonal spherical functions on $p$-adic symmetric spaces
\cite{mac:spherical,mac:orthogonal}, respectively. Notice in this
connection that in these two special cases {\em all} dominant
weights are automatically sufficiently deep.

\begin{remark}{\em i.}
The weights $\boldsymbol{\lambda}$ sufficiently deep in the dominant
cone amount precisely to the {\em dominant} weights of the form
$\boldsymbol{\lambda}=\tilde{\boldsymbol{\lambda}}+(M^{(s)}-1)\boldsymbol{\rho}^{(s)}+(M^{(l)}-1)\boldsymbol{\rho}^{(l)}$
with $\tilde{\boldsymbol{\lambda}}\in\mathcal{P}_+$.
\end{remark}

\begin{remark}{\em ii.}
When  the dominant weights $\boldsymbol{\lambda}$,
$\boldsymbol{\mu}$ are not comparable in the dominance ordering
$\preceq $ \eqref{do} and moreover neither lies sufficiently deep in
the dominant cone, then there is no a priori reason for the
orthogonality in Eq. \eqref{orth:eq} to hold. Indeed, computer
experiments for small rank indicate that orthogonality may indeed be
violated in this situation. However, if one would replace in the
definition of the Bernstein-Szeg\"o polynomials given by Eqs.
\eqref{bs1}, \eqref{bs2} the dominance ordering by a {\em linear
ordering} that is compatible (i.e. extends) $\preceq$ \eqref{do},
then one would end up with an orthogonal basis that coincides with
our basis of Bernstein-Szeg\"o polynomials for weights
$\boldsymbol{\lambda}$ sufficiently deep. Clearly such a
construction would depend (where the weight is not sufficiently
deep) on the choice of the linear extension of the dominance
ordering $\preceq$ \eqref{do}.
\end{remark}

\begin{remark}{\em iii.}
The classical one-variable Bernstein-Szeg\"o polynomials play an
important role in the study of the large-degree asymptotics of
orthogonal polynomials on the unit circle \cite{sze:orthogonal}. In
a nutshell, the idea is that the weight function of the family of
orthogonal polynomials whose asymptotics one would like to determine
can be approximated (assuming certain analyticity conditions)  by
the weight function $\Delta (x)$ \eqref{weight} for $M\to +\infty$
and a suitable choice of the $t$-parameters. The explicit formula
for the Bernstein-Szeg\"o polynomials in Eqs. \eqref{ef},
\eqref{ef2} then converges to the asymptotic formula for the
orthogonal polynomials in question. In
\cite{rui:factorized,die:asymptotic}, special cases of the
multivariate Bernstein-Szeg\"o polynomials studied in the present
paper were employed to compute---in an analogous manner---the
asymptotics of families of multivariate orthogonal polynomials
(associated with root systems) for dominant weights
$\boldsymbol{\lambda}$ deep in the Weyl chamber (i.e. with the
distance to the walls going to $+\infty$). An important class of
multivariate polynomials whose large-degree asymptotics could be
determined by means of this method is given by the Macdonald
polynomials \cite{mac:orthogonal,mac:affine}.
\end{remark}

\section{Triangularity and Orthogonality}\label{sec3}
Following the spirit of Macdonald's concise approach towards the
Hall-Littlewood polynomials associated with root systems in Ref.
\cite[\S 10]{mac:orthogonal}, the idea of the proof of Theorem
\ref{form:thm} is to demonstrate that the explicit formula stated in
the theorem satisfies the two properties characterizing the
Bernstein-Szeg\"o polynomials given by Eqs. \eqref{bs1},
\eqref{bs2}. The orthogonality (Theorem \ref{orth:thm}) and the norm
formulas (Theorem \ref{norm:thm}) are then seen to follow from this
explicit formula.

First we verify the triangularity of $P_{\boldsymbol{\lambda}}
(\mathbf{x})$ \eqref{exp} with respect to the monomial basis
expressed in Eq. \eqref{bs1}.
\begin{proposition}[Triangularity]\label{tria:prp}
For $\boldsymbol{\lambda}\in\mathcal{P}_+$ sufficiently deep, the
expansion of the polynomial $P_{\boldsymbol{\lambda}} (\mathbf{x})$
\eqref{exp} on the monomial basis is triangular:
\begin{equation*}
P_{\boldsymbol{\lambda}} (\mathbf{x}) =
\sum_{\boldsymbol{\mu}\in\mathcal{P}^+,\, \boldsymbol{\mu} \preceq
\boldsymbol{\lambda}} c_{\boldsymbol{\lambda}\boldsymbol{\mu}}
m_{\boldsymbol{\mu}} (\mathbf{x}),
\end{equation*}
with $c_{\boldsymbol{\lambda}\boldsymbol{\mu}}\in\mathbb{C}$.
\end{proposition}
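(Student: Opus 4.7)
The plan is to substitute the exponential expansion of $C(\mathbf{x})$ into the formula \eqref{exp}, recognize the outcome via the Weyl denominator formula as a signed sum of Weyl characters, and invoke the standard triangularity of each character on the monomial basis. The burden then falls on a comparison lemma about dominant weights, which the sufficient-depth hypothesis is tailored to ensure.

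First, expand
$$C(\mathbf{x}) \;=\; \sum_{\boldsymbol{\gamma}\in\Gamma} c_{\boldsymbol{\gamma}}\, e^{-i\langle \boldsymbol{\gamma},\mathbf{x}\rangle},$$
where $\Gamma\subset\mathcal{Q}_+$ is the finite set of $\boldsymbol{\gamma}=\sum_{\boldsymbol{\alpha}\in\mathbf{R}_+^{(s)}} k_{\boldsymbol{\alpha}}\boldsymbol{\alpha}+\sum_{\boldsymbol{\alpha}\in\mathbf{R}_+^{(l)}} k_{\boldsymbol{\alpha}}\boldsymbol{\alpha}$ with $0\leq k_{\boldsymbol{\alpha}}\leq M^{(s)}$ (resp.\ $M^{(l)}$). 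Interchanging summations in \eqref{exp} gives
$$P_{\boldsymbol{\lambda}}(\mathbf{x}) \;=\; \sum_{\boldsymbol{\gamma}\in\Gamma} c_{\boldsymbol{\gamma}}\,\frac{A_{\boldsymbol{\lambda}+\boldsymbol{\rho}-\boldsymbol{\gamma}}(\mathbf{x})}{\delta(\mathbf{x})}, \qquad A_{\boldsymbol{\mu}}(\mathbf{x}):=\sum_{w\in W}(-1)^{w}e^{i\langle\boldsymbol{\mu},\mathbf{x}_w\rangle}.$$
By the Weyl character formula, $A_{\boldsymbol{\mu}}/\delta$ vanishes when $\boldsymbol{\mu}$ lies on some reflection hyperplane $\langle\boldsymbol{\mu},\boldsymbol{\alpha}^\vee\rangle=0$, and otherwise equals $(-1)^{w_{\boldsymbol{\gamma}}}\chi_{\boldsymbol{\sigma}_{\boldsymbol{\gamma}}}$, where $w_{\boldsymbol{\gamma}}\in W$ sends $\boldsymbol{\lambda}+\boldsymbol{\rho}-\boldsymbol{\gamma}$ into the open dominant chamber and $\boldsymbol{\sigma}_{\boldsymbol{\gamma}}:=w_{\boldsymbol{\gamma}}(\boldsymbol{\lambda}+\boldsymbol{\rho}-\boldsymbol{\gamma})-\boldsymbol{\rho}\in\mathcal{P}_+$. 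Using the classical monomial expansion $\chi_{\boldsymbol{\sigma}}=m_{\boldsymbol{\sigma}}+\sum_{\boldsymbol{\mu}\prec\boldsymbol{\sigma}}K_{\boldsymbol{\sigma},\boldsymbol{\mu}}\,m_{\boldsymbol{\mu}}$ with nonnegative integer coefficients, the desired triangularity reduces to the assertion $\boldsymbol{\sigma}_{\boldsymbol{\gamma}}\preceq\boldsymbol{\lambda}$ for every contributing $\boldsymbol{\gamma}\in\Gamma$.

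To prove this key comparison, I would compute
$$\boldsymbol{\lambda}-\boldsymbol{\sigma}_{\boldsymbol{\gamma}} \;=\; \bigl((\boldsymbol{\lambda}+\boldsymbol{\rho})-w_{\boldsymbol{\gamma}}(\boldsymbol{\lambda}+\boldsymbol{\rho})\bigr)+w_{\boldsymbol{\gamma}}\boldsymbol{\gamma}.$$
The first summand lies in $\mathcal{Q}_+$ because $\boldsymbol{\lambda}+\boldsymbol{\rho}$ is dominant, whereas the second may contribute a negative part. I would establish the needed dominance by induction on $\ell(w_{\boldsymbol{\gamma}})$: the base $w_{\boldsymbol{\gamma}}=e$ is immediate since then $\boldsymbol{\sigma}_{\boldsymbol{\gamma}}=\boldsymbol{\lambda}-\boldsymbol{\gamma}$ with $\boldsymbol{\gamma}\in\mathcal{Q}_+$. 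For the inductive step, peel a simple reflection $s_{\boldsymbol{\alpha}_i}$ from the left of $w_{\boldsymbol{\gamma}}=s_{\boldsymbol{\alpha}_i}w'$; the strict dominance of $\boldsymbol{\sigma}_{\boldsymbol{\gamma}}+\boldsymbol{\rho}$ forces $\langle w'(\boldsymbol{\lambda}+\boldsymbol{\rho}-\boldsymbol{\gamma}),\boldsymbol{\alpha}_i^\vee\rangle\leq-1$, which—combined with the coefficient bound $k_{\boldsymbol{\alpha}_i}\leq M^{(s)}$ (or $M^{(l)}$) from $\boldsymbol{\gamma}\in\Gamma$ and the depth hypothesis $\langle\boldsymbol{\lambda},\boldsymbol{\alpha}_i^\vee\rangle\geq M^{(s)}-1$ (or $M^{(l)}-1$)—exhibits an auxiliary element of $\Gamma$ fitting a shorter Weyl element to which the inductive hypothesis applies. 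The main obstacle is precisely this bookkeeping: each reflection step should trade the $\boldsymbol{\alpha}_i^\vee$-slack of $\boldsymbol{\lambda}+\boldsymbol{\rho}$ against the $\boldsymbol{\gamma}$-contribution, so that the depth margin $M^{(\bullet)}-1$ absorbs exactly what is needed.
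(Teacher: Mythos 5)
Your first half coincides with the paper's argument: expand $C(\mathbf{x})$ into exponentials, recognize each alternating sum as $\pm\chi_{\boldsymbol{\sigma}_{\boldsymbol{\gamma}}}$ or zero, and reduce everything to the comparison $\boldsymbol{\sigma}_{\boldsymbol{\gamma}}\preceq\boldsymbol{\lambda}$. But that comparison \emph{is} the content of the proposition, and your proposed induction on $\ell(w_{\boldsymbol{\gamma}})$ does not close. Writing $w_{\boldsymbol{\gamma}}=s_{\boldsymbol{\alpha}_i}w'$ you correctly deduce $\langle w'(\boldsymbol{\lambda}+\boldsymbol{\rho}-\boldsymbol{\gamma}),\boldsymbol{\alpha}_i^\vee\rangle\leq-1$, but this says precisely that $w'(\boldsymbol{\lambda}+\boldsymbol{\rho}-\boldsymbol{\gamma})$ is \emph{not} dominant, so no weight you have produced is taken to the dominant chamber by a shorter Weyl element, and the promised ``auxiliary element of $\Gamma$'' is never constructed. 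The natural candidate, $\boldsymbol{\gamma}'=\boldsymbol{\gamma}+\langle\boldsymbol{\lambda}+\boldsymbol{\rho}-\boldsymbol{\gamma},\boldsymbol{\alpha}_i^\vee\rangle\boldsymbol{\alpha}_i$ (so that $\boldsymbol{\lambda}+\boldsymbol{\rho}-\boldsymbol{\gamma}'=s_{\boldsymbol{\alpha}_i}(\boldsymbol{\lambda}+\boldsymbol{\rho}-\boldsymbol{\gamma})$), need not lie in $\Gamma$: membership in $\Gamma$ demands a representation $\sum_{\boldsymbol{\alpha}\in\mathbf{R}_+}k_{\boldsymbol{\alpha}}\boldsymbol{\alpha}$ with \emph{every} coefficient in $[0,M^{(s)}]$ resp.\ $[0,M^{(l)}]$, and that global constraint is not preserved when you alter the $\boldsymbol{\alpha}_i$-coefficient by an amount controlled only by $\langle\boldsymbol{\lambda}+\boldsymbol{\rho}-\boldsymbol{\gamma},\boldsymbol{\alpha}_i^\vee\rangle$. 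This is exactly the bookkeeping you flag as ``the main obstacle,'' and it is what Section \ref{sec5} of the paper exists to handle: the comparison is Proposition \ref{saturated:prp} (applied with $\boldsymbol{\lambda}$ replaced by $\boldsymbol{\lambda}+\boldsymbol{\rho}$), proved by convexity via Kac's characterization of saturated sets (Lemma \ref{kac:lem}), the inclusion $\boldsymbol{\mu}+\text{Conv}(W(\boldsymbol{\nu}))\subset\text{Conv}(W(\boldsymbol{\mu}+\boldsymbol{\nu}))$, and the zonotope description of $\text{Conv}(W(a\boldsymbol{\rho}^{(s)}+b\boldsymbol{\rho}^{(l)}))$.

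If you wish to avoid the convexity machinery, a direct argument along the lines of your displayed decomposition does exist, but it is not an induction. Set $\boldsymbol{\Lambda}=\boldsymbol{\lambda}+\boldsymbol{\rho}$ and $w=w_{\boldsymbol{\gamma}}$; telescoping along a reduced word gives $\boldsymbol{\Lambda}-w(\boldsymbol{\Lambda})=\sum_j\langle\boldsymbol{\Lambda},\boldsymbol{\alpha}_{i_j}^\vee\rangle\,\boldsymbol{\beta}_j$, where the $\boldsymbol{\beta}_j$ exhaust the inversion set $\{\boldsymbol{\beta}\in\mathbf{R}_+\mid w^{-1}(\boldsymbol{\beta})\notin\mathbf{R}_+\}$ and $\boldsymbol{\beta}_j$ has the length of $\boldsymbol{\alpha}_{i_j}$. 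The negative part of $w(\boldsymbol{\gamma})$ is supported on the same set via the length-preserving bijection $\boldsymbol{\alpha}\mapsto-w(\boldsymbol{\alpha})$, with coefficients $k_{\boldsymbol{\alpha}}\leq M^{(\bullet)}\leq m^{(\bullet)}(\boldsymbol{\Lambda})\leq\langle\boldsymbol{\Lambda},\boldsymbol{\alpha}_{i_j}^\vee\rangle$ by the depth hypothesis, so the two sums cancel term by term and $\boldsymbol{\lambda}-\boldsymbol{\sigma}_{\boldsymbol{\gamma}}\in\mathcal{Q}_+$. As written, however, your proposal contains neither this nor any other complete proof of the key step.
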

\begin{proof}
Upon expanding the products in $C(\mathbf{x})$ \eqref{cf1},
\eqref{cf2} it becomes evident that $P_{\boldsymbol{\lambda}}
(\mathbf{x})$ \eqref{exp} is built of a linear combination of
symmetric functions of the form
\begin{equation}\label{terms}
\delta^{-1}(\mathbf{x}) \sum_{w\in W} (-1)^w  e^{i\langle
\boldsymbol{\rho} +\boldsymbol{\lambda}
-\sum_{\boldsymbol{\alpha}\in \mathbf{R}^+} n_{\boldsymbol{\alpha}}
\boldsymbol{\alpha} , \mathbf{x}_w \rangle} ,
\end{equation}
with $0\leq n_{\boldsymbol{\alpha}} \leq M^{(s)}$ for
$\boldsymbol{\alpha}\in \mathbf{R}_+^{(s)}$ and $0\leq
n_{\boldsymbol{\alpha}} \leq M^{(l)}$ for $\boldsymbol{\alpha}\in
\mathbf{R}_+^{(l)}$. The expression in Eq. \eqref{terms} vanishes if
$\boldsymbol{\rho} +\boldsymbol{\lambda}
-\sum_{\boldsymbol{\alpha}\in \mathbf{R}^+} n_{\boldsymbol{\alpha}}
\boldsymbol{\alpha}$ is a singular weight and it is equal---possibly
up to a sign---to a Weyl character
$\chi_{\boldsymbol{\mu}}(\mathbf{x}):=\delta^{-1}(\mathbf{x})\sum_{w\in
W} (-1)^w e^{\langle
\boldsymbol{\rho}+\boldsymbol{\mu},\mathbf{x}_w\rangle }$ otherwise,
where $\boldsymbol{\mu}$ denotes the unique dominant weight in the
translated Weyl orbit $W(\boldsymbol{\rho} +\boldsymbol{\lambda}
-\sum_{\boldsymbol{\alpha}\in \mathbf{R}^+}n_{\boldsymbol{\alpha}}
\boldsymbol{\alpha})-\boldsymbol{\rho}$. Since $0\leq
n_{\boldsymbol{\alpha}} \leq M^{(s)}\leq
m^{(s)}(\boldsymbol{\lambda})+1=m^{(s)}(\boldsymbol{\rho}+\boldsymbol{\lambda})$
for $\boldsymbol{\alpha}\in \mathbf{R}_+^{(s)}$ and $0\leq
n_{\boldsymbol{\alpha}} \leq M^{(l)}\leq
m^{(l)}(\boldsymbol{\lambda})+1=m^{(l)}(\boldsymbol{\rho}+\boldsymbol{\lambda})$
for $\boldsymbol{\alpha}\in \mathbf{R}_+^{(l)}$, we can conclude
from Proposition \ref{saturated:prp} (below)---upon replacing
$\boldsymbol{\lambda}$ by
$\boldsymbol{\lambda}+\boldsymbol{\rho}$---that $\boldsymbol{\rho}
+\boldsymbol{\lambda} -\sum_{\boldsymbol{\alpha}\in
\mathbf{R}^+}n_{\boldsymbol{\alpha}} \boldsymbol{\alpha}\in
\mathcal{P}_+(\boldsymbol{\rho}+\boldsymbol{\lambda})$, whence
$\boldsymbol{\mu} \preceq \boldsymbol{\lambda}$. This shows that
$P_{\boldsymbol{\lambda}}(\mathbf{x})$ \eqref{exp} is a linear
combination of Weyl characters $\chi_{\boldsymbol{\mu}}(\mathbf{x})$
with $\boldsymbol{\mu}\preceq\boldsymbol{\lambda}$.  The statement
of the proposition is thus clear by the standard fact that Weyl
characters expand triangularly on the basis of monomial symmetric
functions.
\end{proof}

The next proposition checks (in particular) that
$P_{\boldsymbol{\lambda}} (\mathbf{x})$ \eqref{exp} satisfies the
orthogonality relations in Eq. \eqref{bs2}.

\begin{proposition}[Partial Biorthogonality]\label{orto:prp}
For $\boldsymbol{\lambda} ,\boldsymbol{\mu}\in\mathcal{P}_+$ such
that $\boldsymbol{\mu}\not\succ\boldsymbol{\lambda}$ the polynomial
$P_{\boldsymbol{\lambda}}(\mathbf{x})$ \eqref{exp} and the monomial
symmetric function $m_{\boldsymbol{\mu}}(\mathbf{x})$ satisfy the
orthogonality relations
\begin{equation*}
\langle P_{\boldsymbol{\lambda}}, m_{\boldsymbol{\mu }}
\rangle_\Delta =
\begin{cases}
0 & \text{if}\; \boldsymbol{\mu}\not\succeq\boldsymbol{\lambda} ,\\
1 & \text{if}\; \boldsymbol{\mu} =\boldsymbol{\lambda} .
\end{cases}
\end{equation*}
\end{proposition}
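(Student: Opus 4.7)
The plan is to reduce $\langle P_{\boldsymbol{\lambda}},m_{\boldsymbol{\mu}}\rangle_\Delta$ to a constant Fourier coefficient on the torus $\mathbb{T}_{\mathbf{R}}$. Two structural observations drive the argument. First, $P_{\boldsymbol{\lambda}}(\mathbf{x})\Delta(\mathbf{x})$ is $W$-invariant: the numerator of $P_{\boldsymbol{\lambda}}$ is $W$-anti-invariant by the $(-1)^w$ factor, balanced by the $W$-anti-invariance of $\delta$ in the denominator, while $C(\mathbf{x})C(-\mathbf{x})$ regroups into a product over the full sets $\mathbf{R}^{(s)}$ and $\mathbf{R}^{(l)}$ and is thus $W$-invariant. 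Second, $|\delta(\mathbf{x})|^2/\delta(\mathbf{x})=\overline{\delta(\mathbf{x})}$ and $\overline{\delta(\mathbf{x})}\,e^{i\langle\boldsymbol{\rho},\mathbf{x}\rangle}=\prod_{\boldsymbol{\alpha}\in\mathbf{R}_+}(1-e^{i\langle\boldsymbol{\alpha},\mathbf{x}\rangle})$, which converts everything into power-series form.

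Using the $W$-invariance of $P_{\boldsymbol{\lambda}}\Delta$, I would first unfold $\overline{m_{\boldsymbol{\mu}}(\mathbf{x})}=|W_{\boldsymbol{\mu}}|^{-1}\sum_{v\in W}e^{-i\langle\boldsymbol{\mu},v\mathbf{x}\rangle}$ and change variables $\mathbf{x}\to v^{-1}\mathbf{x}$ in each summand, so that all $|W|$ integrals coincide and
$$\langle P_{\boldsymbol{\lambda}},m_{\boldsymbol{\mu}}\rangle_\Delta=\frac{1}{|W_{\boldsymbol{\mu}}|\,\text{Vol}(\mathbb{T}_{\mathbf{R}})}\int_{\mathbb{T}_{\mathbf{R}}}P_{\boldsymbol{\lambda}}(\mathbf{x})\Delta(\mathbf{x})\,e^{-i\langle\boldsymbol{\mu},\mathbf{x}\rangle}\,\text{d}\mathbf{x}.$$
Next I would unfold the alternating sum defining $P_{\boldsymbol{\lambda}}$: substituting its explicit formula together with $|\delta|^2/\delta=\overline{\delta}$, and then changing variables $\mathbf{x}\to\mathbf{x}_{w^{-1}}$ in the $w$-th summand, the sign $(-1)^w$ is canceled by the sign produced by $\overline{\delta(\mathbf{x}_{w^{-1}})}=(-1)^w\overline{\delta(\mathbf{x})}$, one factor of $C(\mathbf{x})$ cancels against the denominator, and the result simplifies to
$$\sum_{w\in W}\int_{\mathbb{T}_{\mathbf{R}}}\frac{\overline{\delta(\mathbf{x})}}{C(-\mathbf{x})}\,e^{i\langle\boldsymbol{\rho}+\boldsymbol{\lambda}-w\boldsymbol{\mu},\mathbf{x}\rangle}\,\text{d}\mathbf{x}.$$

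Invoking the identity for $\overline{\delta}\,e^{i\langle\boldsymbol{\rho},\mathbf{x}\rangle}$, each integrand becomes $e^{i\langle\boldsymbol{\lambda}-w\boldsymbol{\mu},\mathbf{x}\rangle}F(\mathbf{x})$ with $F(\mathbf{x}):=\prod_{\boldsymbol{\alpha}\in\mathbf{R}_+}(1-e^{i\langle\boldsymbol{\alpha},\mathbf{x}\rangle})/C(-\mathbf{x})$. The hypothesis $|t_m^{(s)}|,|t_m^{(l)}|<1$ guarantees that $1/C(-\mathbf{x})$ admits a uniformly convergent Fourier expansion via the geometric series $(1+t\,z)^{-1}=\sum_{k\geq 0}(-t\,z)^k$, so $F$ itself expands as a uniformly convergent Fourier series on $\mathbb{T}_{\mathbf{R}}$ with weights supported in the cone $\mathcal{Q}_+$. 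Term-by-term integration against the exponential basis then picks out the Fourier coefficient of $F$ at the weight $w\boldsymbol{\mu}-\boldsymbol{\lambda}$, which vanishes unless $w\boldsymbol{\mu}-\boldsymbol{\lambda}\in\mathcal{Q}_+$. Combined with the standard inequality $w\boldsymbol{\mu}\preceq\boldsymbol{\mu}$ for $\boldsymbol{\mu}\in\mathcal{P}_+$, a nonzero contribution forces $\boldsymbol{\mu}\succeq w\boldsymbol{\mu}\succeq\boldsymbol{\lambda}$, contradicting $\boldsymbol{\mu}\not\succeq\boldsymbol{\lambda}$ and settling the orthogonality case. When $\boldsymbol{\mu}=\boldsymbol{\lambda}$, the chain collapses to $w\boldsymbol{\lambda}=\boldsymbol{\lambda}$, i.e.\ $w\in W_{\boldsymbol{\lambda}}$, so precisely $|W_{\boldsymbol{\lambda}}|$ integrals survive, each returning $\text{Vol}(\mathbb{T}_{\mathbf{R}})$ times the constant Fourier coefficient of $F$, which equals $1$ by inspection; the prefactor $1/(|W_{\boldsymbol{\mu}}|\,\text{Vol}(\mathbb{T}_{\mathbf{R}}))$ then gives $\langle P_{\boldsymbol{\lambda}},m_{\boldsymbol{\lambda}}\rangle_\Delta=1$.

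The principal obstacle is careful bookkeeping rather than any deep new input: one must faithfully track the $(-1)^w$ signs under Weyl substitutions on $\overline{\delta}$, verify the $W$-invariance of $C(\mathbf{x})C(-\mathbf{x})$ (which relies on pairing each positive root with its negative to extend the products over $\mathbf{R}_+^{(s)},\mathbf{R}_+^{(l)}$ to the full $\mathbf{R}^{(s)},\mathbf{R}^{(l)}$), and justify the term-by-term integration of $F$ on $\mathbb{T}_{\mathbf{R}}$ under $|t_m^{(s)}|,|t_m^{(l)}|<1$. With these controlled, both the orthogonality and the normalization drop out cleanly from the confinement of the Fourier support of $F$ to $\mathcal{Q}_+$.
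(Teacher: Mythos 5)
Your proposal is correct and follows essentially the same route as the paper: both reduce $\langle P_{\boldsymbol{\lambda}},m_{\boldsymbol{\mu}}\rangle_\Delta$ via Weyl-group changes of variables to a single sum over $w\in W$ of integrals of $\prod_{\boldsymbol{\alpha}\in\mathbf{R}_+}(1-e^{i\langle\boldsymbol{\alpha},\mathbf{x}\rangle})\,C(-\mathbf{x})^{-1}e^{i\langle\boldsymbol{\lambda}-\boldsymbol{\mu}_w,\mathbf{x}\rangle}$, then expand $1/C(-\mathbf{x})$ as a convergent Fourier series supported on $\mathcal{Q}_+$ and extract the constant term, with the stabilizer $W_{\boldsymbol{\lambda}}$ accounting for the normalization when $\boldsymbol{\mu}=\boldsymbol{\lambda}$. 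The only difference is cosmetic (you collapse the two Weyl sums in two separate steps rather than at once), so no further comment is needed.
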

\begin{proof}
An explicit computation starting from the definitions entails that
\begin{eqnarray*}
&& \langle P_{\boldsymbol{\lambda}}, m_{\boldsymbol{\mu}}
\rangle_\Delta  = \frac{1 }{
|W|\,\text{Vol}(\mathbb{T}_{\mathbf{R}})\,
|W_{\boldsymbol{\mu}} |} \times \\
&& \int_{\mathbb{T}_{\mathbf{R}}} \frac{\delta
(-\mathbf{x})}{\mathcal{C}(\mathbf{x})\mathcal{C}(-\mathbf{x})}
\sum_{w_1\in W} (-1)^{w_1}\mathcal{C}(\mathbf{x}_{w_1}) e^{i\langle
\boldsymbol{\rho}+ \boldsymbol{\lambda}
,\mathbf{x}_{w_1}\rangle}\sum_{w_2\in W}e^{-i\langle
\boldsymbol{\mu}
,\mathbf{x}_{w_2}\rangle} \text{d}\mathbf{x} \\
&=&\!\! \frac{1 }{ \text{Vol}(\mathbb{T}_{\mathbf{R}})\,
|W_{\boldsymbol{\mu}} |} \sum_{w\in W}
\int_{\mathbb{T}_{\mathbf{R}}} \frac{1}{\mathcal{C}(-\mathbf{x})}
\prod_{\boldsymbol{\alpha}\in \mathbf{R}_+} (1-e^{i\langle
\boldsymbol{\alpha} ,\mathbf{x}\rangle})\; e^{i\langle
\boldsymbol{\lambda}-\boldsymbol{\mu}_w ,\mathbf{x}\rangle}
 \text{d}\mathbf{x} \\
&=& \!\!\frac{1}{\text{Vol}(\mathbb{T}_{\mathbf{R}})\,
|W_{\boldsymbol{\mu}} |}  \sum_{w\in W}
\int_{\mathbb{T}_{\mathbf{R}}}  e^{i\langle
\boldsymbol{\lambda}-\boldsymbol{\mu}_w ,\mathbf{x}\rangle}
\prod_{\boldsymbol{\alpha}\in \mathbf{R}_+} (1-e^{i\langle
\boldsymbol{\alpha} ,\mathbf{x}\rangle}) \times \\
&& \prod_{\boldsymbol{\alpha}\in
\mathbf{R}_+^{(s)}}(1+\sum_{n=1}^\infty f^{(s)}_n e^{i n \langle
\boldsymbol{\alpha} ,\mathbf{x}\rangle})
\prod_{\boldsymbol{\alpha}\in
\mathbf{R}_+^{(l)}}(1+\sum_{n=1}^\infty f^{(l)}_n e^{i n \langle
\boldsymbol{\alpha} ,\mathbf{x}\rangle}) \text{d}\mathbf{x} ,
\end{eqnarray*}
where  $f^{(s)}_n$ and $f^{(l)}_n$ denote the coefficients in the
Taylor series expansion of $1/c^{(s)}(z)$ and $1/c^{(l)}(z)$,
respectively, around $z=0$. The integrals on the last two lines pick
up the constant terms of the respective integrands multiplied by the
volume of the torus $\mathbb{T}_{\mathbf{R}}$. A nonzero constant
term can appear only when
$\boldsymbol{\mu}_w\succeq\boldsymbol{\lambda}$ (for some $w\in W$).
When $\boldsymbol{\mu}\not\succeq\boldsymbol{\lambda}$ one
automatically has that
$\boldsymbol{\mu}_w\not\succeq\boldsymbol{\lambda}$ for all $w\in W$
(since $\boldsymbol{\mu}_w\preceq\boldsymbol{\mu}$), whence the
constant term vanishes in this case. On the other hand, when
$\boldsymbol{\mu}=\boldsymbol{\lambda}$ the constant part of the
term labeled by $w$ is nonzero (namely equal to $1$) if and only if
$w\in W_{\boldsymbol{\lambda}}$. By summing over all these
contributions stemming from the stabilizer
$W_{\boldsymbol{\lambda}}$ the proposition follows.
\end{proof}

Combination of Propositions \ref{tria:prp} and \ref{orto:prp}
reveals that for $\boldsymbol{\lambda}\in\mathcal{P}_+$ sufficiently
deep $P_{\boldsymbol{\lambda}}(\mathbf{x})$ \eqref{exp} coincides
with the corresponding Bernstein-Szeg\"o polynomial
$p_{\boldsymbol{\lambda}}(\mathbf{x})$ defined by Eqs. \eqref{bs1},
\eqref{bs2} up to normalization. Furthermore, since it is clear from
Proposition \ref{orto:prp} and the definition of the
Bernstein-Szeg\"o polynomials that $\langle
P_{\boldsymbol{\lambda}}, p_{\boldsymbol{\mu}}\rangle_\Delta =0$ for
$\boldsymbol{\mu}\not\succeq \boldsymbol{\lambda}\in\mathcal{P}_+$,
we conclude that $\langle p_{\boldsymbol{\lambda}},
p_{\boldsymbol{\mu}}\rangle_\Delta =0$ for
$\boldsymbol{\mu}\not\succeq \boldsymbol{\lambda}\in\mathcal{P}_+$
with $\boldsymbol{\lambda}$ sufficiently deep; the orthogonality
stated in Theorem \ref{orth:thm} then follows in view of the
symmetry $\langle p_{\boldsymbol{\lambda}},
p_{\boldsymbol{\mu}}\rangle_\Delta =\overline{\langle
p_{\boldsymbol{\mu}}, p_{\boldsymbol{\lambda}}\rangle_\Delta}$.

\section{Normalization}\label{sec4}
To finish the proof of Theorem \ref{form:thm} it remains to verify
that the leading coefficient of
$P_{\boldsymbol{\lambda}}(\mathbf{x})$ \eqref{exp} is given by
$\mathcal{N}_{\boldsymbol{\lambda}}$ \eqref{lc}.

\begin{proposition}[Leading Coefficient]\label{lc:prp}
The leading coefficient in the monomial expansion of
$P_{\boldsymbol{\lambda}}(\mathbf{x})$ \eqref{exp} in Proposition
\ref{tria:prp} is given by
$c_{\boldsymbol{\lambda}\boldsymbol{\lambda}}=\mathcal{N}_{\boldsymbol{\lambda}}$
\eqref{lc}.
\end{proposition}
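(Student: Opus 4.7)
The plan is to extract the leading coefficient $c_{\boldsymbol{\lambda}\boldsymbol{\lambda}}$ as an explicit alternating sum over the Weyl group and then collapse that sum by a product identity. First, expand the weight factor as $C(\mathbf{x})=\sum_{\boldsymbol{\mu}\in\mathcal{Q}_+}A_{\boldsymbol{\mu}}\,e^{-i\langle\boldsymbol{\mu},\mathbf{x}\rangle}$ and substitute into \eqref{exp}; the antisymmetrization distributes term-by-term, each inner alternating sum either vanishing (singular weight) or producing a signed Weyl character. Since $\chi_{\boldsymbol{\nu}}=m_{\boldsymbol{\nu}}+(\text{lower terms})$, extracting the coefficient of $m_{\boldsymbol{\lambda}}$ retains only those pairs $(\boldsymbol{\mu},w)$ with $\boldsymbol{\rho}+\boldsymbol{\lambda}-\boldsymbol{\mu}=w(\boldsymbol{\rho}+\boldsymbol{\lambda})$, giving
\[
c_{\boldsymbol{\lambda}\boldsymbol{\lambda}}=\sum_{w\in W}(-1)^w A_{\sigma_w},\qquad \sigma_w:=(\boldsymbol{\rho}+\boldsymbol{\lambda})-w(\boldsymbol{\rho}+\boldsymbol{\lambda}).
\]

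Next I would localize this sum onto the stabilizer parabolic $W_{\tilde{\boldsymbol{\lambda}}}$. Writing $\boldsymbol{\rho}+\boldsymbol{\lambda}=\tilde{\boldsymbol{\lambda}}+M^{(s)}\boldsymbol{\rho}^{(s)}+M^{(l)}\boldsymbol{\rho}^{(l)}$ (cf.\ Remark~i) and applying the standard identity $\boldsymbol{\rho}^{(s)}-w\boldsymbol{\rho}^{(s)}=\sum_{\boldsymbol{\alpha}\in\mathbf{R}^{(s)}_+\cap w\mathbf{R}_-}\boldsymbol{\alpha}$ together with its long-root counterpart, one obtains
\[
\sigma_w=(\tilde{\boldsymbol{\lambda}}-w\tilde{\boldsymbol{\lambda}})+M^{(s)}\!\!\!\sum_{\boldsymbol{\alpha}\in\mathbf{R}^{(s)}_+\cap w\mathbf{R}_-}\!\!\!\boldsymbol{\alpha}+M^{(l)}\!\!\!\sum_{\boldsymbol{\alpha}\in\mathbf{R}^{(l)}_+\cap w\mathbf{R}_-}\!\!\!\boldsymbol{\alpha}.
\]
Because $C(\mathbf{x})$ is a polynomial of degree $\le M^{(s)}$ (resp.\ $\le M^{(l)}$) in each short (resp.\ long) root exponential $e^{-i\langle\boldsymbol{\alpha},\mathbf{x}\rangle}$, the last two sums already saturate the admissible multiplicities on the inversion set of $w$. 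The excess $\tilde{\boldsymbol{\lambda}}-w\tilde{\boldsymbol{\lambda}}\in\mathcal{Q}_+$ thus cannot be absorbed into a valid decomposition unless it vanishes; this is the key combinatorial lemma --- and the main obstacle I foresee --- yielding $A_{\sigma_w}=0$ for $w\notin W_{\tilde{\boldsymbol{\lambda}}}$, whereas for $w\in W_{\tilde{\boldsymbol{\lambda}}}$ the saturating decomposition is the unique admissible one, so that
\[
A_{\sigma_w}=(-\mathbf{t}_s)^{\ell^{(s)}(w)}(-\mathbf{t}_l)^{\ell^{(l)}(w)},\qquad \ell^{(s)}(w):=|\mathbf{R}^{(s)}_+\cap w\mathbf{R}_-|,\ \ell^{(l)}(w):=|\mathbf{R}^{(l)}_+\cap w\mathbf{R}_-|.
\]

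Once this lemma is in place, the signs from $(-1)^w=(-1)^{\ell^{(s)}(w)+\ell^{(l)}(w)}$ cancel those in $(-\mathbf{t}_s)^{\ell^{(s)}(w)}(-\mathbf{t}_l)^{\ell^{(l)}(w)}$, and the sum collapses to the bigraded Poincar\'e polynomial
\[
c_{\boldsymbol{\lambda}\boldsymbol{\lambda}}=\sum_{w\in W_{\tilde{\boldsymbol{\lambda}}}}\mathbf{t}_s^{\ell^{(s)}(w)}\mathbf{t}_l^{\ell^{(l)}(w)}.
\]
The proof is completed by the two-parameter generalization of Macdonald's product formula for the Poincar\'e polynomial of a parabolic subgroup of $W$ (which may itself require a separate derivation in this bigraded form, for instance by a rank induction on $W_{\tilde{\boldsymbol{\lambda}}}$ exploiting its root-subsystem structure), identifying the sum with the right-hand side $\mathcal{N}_{\boldsymbol{\lambda}}$ of \eqref{lc}.
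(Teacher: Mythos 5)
Your overall architecture coincides with the paper's: reduce $c_{\boldsymbol{\lambda}\boldsymbol{\lambda}}$ to a signed sum supported on the stabilizer $W_{\tilde{\boldsymbol{\lambda}}}$, recognize it as the bigraded Poincar\'e polynomial $\sum_{w\in W_{\tilde{\boldsymbol{\lambda}}}}\mathbf{t}_s^{\ell_s(w)}\mathbf{t}_l^{\ell_l(w)}$, and close with Macdonald's product formula. The sign bookkeeping and the identity $c_{\boldsymbol{\lambda}\boldsymbol{\lambda}}=\sum_{w\in W}(-1)^w A_{\sigma_w}$ are correct, and your worry about the last step is unfounded: \cite[Theorem (2.4)]{mac:poincare} is already stated in the required multi-parameter form, which is exactly what the paper cites.

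The genuine gap is precisely where you flag it, and it is the heart of the proof rather than a technicality. Your argument that $A_{\sigma_w}=0$ for $w\notin W_{\tilde{\boldsymbol{\lambda}}}$ --- the inversion-set sums ``already saturate the admissible multiplicities,'' so the excess $\tilde{\boldsymbol{\lambda}}-w\tilde{\boldsymbol{\lambda}}$ ``cannot be absorbed'' --- is not valid as stated: an admissible decomposition $\sigma_w=\sum_{\boldsymbol{\alpha}}n_{\boldsymbol{\alpha}}\boldsymbol{\alpha}$ is under no obligation to refine your chosen splitting of $\sigma_w$, since elements of $\mathcal{Q}_+$ admit many rewritings (for instance $\boldsymbol{\alpha}+\boldsymbol{\beta}$ may itself be a single positive root, so mass can be redistributed off the inversion set). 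The same issue undermines the uniqueness assertion in your formula for $A_{\sigma_w}$ when $w\in W_{\tilde{\boldsymbol{\lambda}}}$. What actually makes both claims work is the convex-geometric analysis of Section \ref{sec5}: the weights $\boldsymbol{\rho}+\boldsymbol{\lambda}-\sum_{\boldsymbol{\alpha}}n_{\boldsymbol{\alpha}}\boldsymbol{\alpha}$ sweep out a zonotope which, by Lemmas \ref{ch1:lem} and \ref{ch2:lem}, lies inside $\text{Conv}(W(\boldsymbol{\rho}+\boldsymbol{\lambda}))$; a point of the orbit $W(\boldsymbol{\rho}+\boldsymbol{\lambda})$ is a vertex of that hull, hence must be a vertex of the zonotope, which forces $n_{\boldsymbol{\alpha}}\in\{0,M^{(s)}\}$ resp.\ $\{0,M^{(l)}\}$ with uniquely determined support (Proposition \ref{int:prp}); and Lemma \ref{bound:lem} (using strong dominance of $\boldsymbol{\nu}=M^{(s)}\boldsymbol{\rho}^{(s)}+M^{(l)}\boldsymbol{\rho}^{(l)}$ via the computation $\langle\boldsymbol{\mu}-w_1^{-1}(\boldsymbol{\mu}),\boldsymbol{\nu}\rangle=0$) identifies the admissible supports with the inversion sets $\mathbf{S}_w$, $w\in W_{\tilde{\boldsymbol{\lambda}}}$ (Proposition \ref{bound:prp}). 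Without this machinery, or an equivalent substitute for your ``key combinatorial lemma,'' the proof is incomplete.
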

\begin{proof}
The polynomial $P_{\boldsymbol{\lambda}}(\mathbf{x})$ \eqref{exp}
reads explicitly
\begin{equation*}
\frac{1}{\delta (\mathbf{x})} \sum_{w\in W} (-1)^w e^{i\langle
\boldsymbol{\rho}+\boldsymbol{\lambda},\mathbf{x}_w\rangle}
\prod_{\boldsymbol{\alpha}\in\mathbf{R}_+^{(s)}}
\prod_{m=1}^{M^{(s)}} (1+t_m^{(s)} e^{-i\langle
\boldsymbol{\alpha},\mathbf{x}_w\rangle})
\prod_{\boldsymbol{\alpha}\in\mathbf{R}_+^{(l)}}
\prod_{m=1}^{M^{(l)}} (1+t_m^{(l)} e^{-i\langle
\boldsymbol{\alpha},\mathbf{x}_w\rangle}) .
\end{equation*}
As was remarked in the proof of Proposition \ref{tria:prp}, this
expression expands as a linear combination of terms of the form in
Eq. \eqref{terms}, with $0\leq n_{\boldsymbol{\alpha}} \leq
M^{(s)}\leq m^{(s)}(\boldsymbol{\rho}+\boldsymbol{\lambda})$ for
$\boldsymbol{\alpha}\in \mathbf{R}_+^{(s)}$ and $0\leq
n_{\boldsymbol{\alpha}} \leq M^{(l)}\leq
m^{(l)}(\boldsymbol{\rho}+\boldsymbol{\lambda})$ for
$\boldsymbol{\alpha}\in \mathbf{R}_+^{(l)}$. Upon replacing
$\boldsymbol{\lambda}$ by $\boldsymbol{\lambda}+\boldsymbol{\rho}$
in Proposition \ref{saturated:prp} and Proposition \ref{int:prp}
(below), it follows that in order for these terms to contribute to
the leading monomial it is {\em necessary} that
$n_{\boldsymbol{\alpha}} \in \{ 0, M^{(s)}\}$ for all
$\boldsymbol{\alpha}\in \mathbf{R}_+^{(s)}$ and $
n_{\boldsymbol{\alpha}} \in\{ 0, M^{(l)}\}$ for all
$\boldsymbol{\alpha}\in \mathbf{R}_+^{(l)}$. From now on we will
assume that both $M^{(s)}$ and $M^{(l)}$ are positive. (In the case
that $M^{(s)},M^{(l)}=0$ one has that
$P_{\boldsymbol{\lambda}}(\mathbf{x})=\chi_{\boldsymbol{\lambda}}(\mathbf{x})$,
whence $c_{\boldsymbol{\lambda}\boldsymbol{\lambda}}=1$ trivially;
the cases $M^{(s)}=0$, $M^{(l)}\geq 1$  and $M^{(s)}\geq 1$,
$M^{(l)}=0$ can be recovered from the analysis below upon
substituting formally $\mathbf{R}_+=\mathbf{R}_+^{(l)}$ and
$\mathbf{R}_+^{(s)}=\emptyset$ or $\mathbf{R}_+=\mathbf{R}_+^{(s)}$
and $\mathbf{R}_+^{(l)}=\emptyset$, respectively.) The corresponding
terms are then given explicitly by
\begin{equation*}
\frac{1}{\delta (\mathbf{x})} \sum_{w\in W} (-1)^w
\sum_{\mathbf{S}\subset\mathbf{R}_+} e^{i\langle
\boldsymbol{\mu}(\mathbf{S}),\mathbf{x}_w\rangle}
(-\mathbf{t}_s)^{\# (\mathbf{S}\cap \mathbf{R}_+^{(s)})}
(-\mathbf{t}_l)^{\# (\mathbf{S}\cap \mathbf{R}_+^{(l)})} ,
\end{equation*}
with $ \boldsymbol{\mu}
(\mathbf{S}):=\boldsymbol{\rho}+\boldsymbol{\lambda}-M^{(s)}\sum_{\boldsymbol{\alpha}\in\mathbf{S}\cap
\mathbf{R}_+^{(s)}}\boldsymbol{\alpha}-M^{(l)}\sum_{\boldsymbol{\alpha}\in\mathbf{S}\cap
\mathbf{R}_+^{(l)}}\boldsymbol{\alpha}$ and
$\mathbf{t}_s=-t_1^{(s)}\cdots t_{M^{(s)}}^{(s)}$,
$\mathbf{t}_l=-t_1^{(l)}\cdots t_{M^{(l)}}^{(l)}$. Rewriting this
expression in terms of Weyl characters
$\chi_{\boldsymbol{\mu}}(\mathbf{x})=\delta^{-1}(\mathbf{x})\sum_{w\in
W} (-1)^w e^{\langle
\boldsymbol{\rho}+\boldsymbol{\mu},\mathbf{x}_w\rangle }$,
$\boldsymbol{\mu}\in\mathcal{P}_+$ produces
\begin{equation*}
\sum_{\mathbf{S}\subset\mathbf{R}_+} (-1)^{w_{\mathbf{S}}}
\chi_{\boldsymbol{\lambda}(\mathbf{S})} (\mathbf{x})
(-\mathbf{t}_s)^{\# (\mathbf{S}\cap \mathbf{R}_+^{(s)})}
(-\mathbf{t}_l)^{\# (\mathbf{S}\cap \mathbf{R}_+^{(l)})} ,
\end{equation*}
where $w_{\mathbf{S}}$ denotes the unique shortest Weyl group
element permuting $\boldsymbol{\mu}(\mathbf{S})$ into the dominant
cone $\mathcal{P}_+$ and
$\boldsymbol{\lambda}(\mathbf{S}):=w_{\mathbf{S}}(\boldsymbol{\mu}(\mathbf{S}))-\boldsymbol{\rho}$
(here we have also assumed the convention that the Weyl character
$\chi_{\boldsymbol{\lambda} (\mathbf{S})} (\mathbf{x})$ vanishes
when $\boldsymbol{\lambda}(\mathbf{S})$ is not dominant). The
contributions to the leading monomial stem from those subsets
$\mathbf{S}\subset\mathbf{R}_+$ for which
$\boldsymbol{\lambda}(\mathbf{S})=\boldsymbol{\lambda}$, or
equivalently, $\boldsymbol{\mu}(\mathbf{S})\in
W(\boldsymbol{\rho}+\boldsymbol{\lambda})$. From Proposition
\ref{bound:prp} (below) with $\boldsymbol{\lambda}$ replaced by
$\boldsymbol{\lambda}+\boldsymbol{\rho}$, it follows that these are
precisely those subsets $\mathbf{S}\subset\mathbf{R}_+$ of the form
$\mathbf{S}=\mathbf{S}_w:=\{\boldsymbol{\alpha}\in\mathbf{R}_+\mid
w(\boldsymbol{\alpha})\not\in \mathbf{R}_+ \}$ for some $w\in
W_{\tilde{\boldsymbol{\lambda}}}$, where
$\tilde{\boldsymbol{\lambda}}:=\boldsymbol{\rho}+\boldsymbol{\lambda}-M^{(s)}
\boldsymbol{\rho}^{(s)}-M^{(l)}\boldsymbol{\rho}^{(l)}$ (cf. in this
connection also the remark just after Proposition \ref{bound:prp}).
By summing over all contributions from the subsets $\mathbf{S}_w$,
$w\in W_{\tilde{\boldsymbol{\lambda}}}$ (and recalling the fact that
the monomial expansion of the Weyl character
$\chi_{\boldsymbol{\lambda}}(\mathbf{x})$ is monic with leading term
$m_{\boldsymbol{\lambda}}(\mathbf{x})$), one concludes that the
leading coefficient $c_{\boldsymbol{\lambda}\boldsymbol{\lambda}}$
in the monomial expansion of $P_{\boldsymbol{\lambda}}(\mathbf{x})$
is given by the following Poincar\'e type series of the stabilizer
$W_{\tilde{\boldsymbol{\lambda}}}$:
\begin{equation*}
c_{\boldsymbol{\lambda}\boldsymbol{\lambda}}= \sum_{w\in
W_{\tilde{\boldsymbol{\lambda}}}} \mathbf{t}_s^{\ell_s (w)}
\mathbf{t}_l^{\ell_l(w)} ,
\end{equation*}
where $\ell_s(w):=\# \{\boldsymbol{\alpha}\in \mathbf{R}_+^{(s)}
\mid w(\boldsymbol{\alpha})\not\in \mathbf{R}_+^{(s)}\}$,
$\ell_l(w):=\# \{\boldsymbol{\alpha}\in \mathbf{R}_+^{(l)} \mid
w(\boldsymbol{\alpha})\not\in \mathbf{R}_+^{(l)}\}$. (Notice in this
respect that the minus signs dropped out as
$(-1)^{w_{\mathbf{S}}}=(-1)^{\ell_s(w_{\mathbf{S}})+\ell_l(w_{\mathbf{S}})}=(-1)^{\#
S}$.) Invoking a general product formula for the (two-parameter)
Poincar\'e series of Weyl groups due to Macdonald \cite[Theorem
(2.4)]{mac:poincare} then gives rise to
\begin{equation*}
c_{\boldsymbol{\lambda}\boldsymbol{\lambda}} =
\prod_{\begin{subarray}{c}\boldsymbol{\alpha}\in\mathbf{R}_+^{(s)} \\
\langle \tilde{\boldsymbol{\lambda}},\boldsymbol{\alpha}^\vee\rangle
= 0
\end{subarray}}
\frac{1-\mathbf{t}_{s}^{1+\text{ht}_{s}(\boldsymbol{\alpha})}
\mathbf{t}_{l}^{\text{ht}_{l}(\boldsymbol{\alpha})}}
{1-\mathbf{t}_{s}^{\text{ht}_{s}(\boldsymbol{\alpha})}
\mathbf{t}_{l}^{\text{ht}_{l}(\boldsymbol{\alpha})}}
\prod_{\begin{subarray}{c}\boldsymbol{\alpha}\in\mathbf{R}_+^{(l)} \\
\langle \tilde{\boldsymbol{\lambda}},\boldsymbol{\alpha}^\vee\rangle
= 0
\end{subarray}}
\frac{1-\mathbf{t}_{s}^{\text{ht}_{s}(\boldsymbol{\alpha})}
\mathbf{t}_{l}^{1+\text{ht}_{l}(\boldsymbol{\alpha})}}
{1-\mathbf{t}_{s}^{\text{ht}_{s}(\boldsymbol{\alpha})}
\mathbf{t}_{l}^{\text{ht}_{l}(\boldsymbol{\alpha})}} ,
\end{equation*}
where  $\text{ht}_{s}(\boldsymbol{\alpha})=
\sum_{\boldsymbol{\beta}\in\mathbf{R}_+^{(s)}} \langle
\boldsymbol{\alpha},\boldsymbol{\beta}^\vee \rangle /2$ and
$\text{ht}_{l}(\boldsymbol{\alpha})=
\sum_{\boldsymbol{\beta}\in\mathbf{R}_+^{(l)}} \langle
\boldsymbol{\alpha},\boldsymbol{\beta}^\vee \rangle /2$, which
completes the proof of the proposition.
\end{proof}

Finally, by combining Propositions \ref{tria:prp}, \ref{orto:prp},
and \ref{lc:prp}, the norm formula in Theorem \ref{norm:thm} readily
follows: $\langle
p_{\boldsymbol{\lambda}},p_{\boldsymbol{\lambda}}\rangle_\Delta=
\mathcal{N}_{\boldsymbol{\lambda}}^{-2}\langle
P_{\boldsymbol{\lambda}},P_{\boldsymbol{\lambda}}\rangle_\Delta=
\mathcal{N}_{\boldsymbol{\lambda}}^{-1}\langle
P_{\boldsymbol{\lambda}},m_{\boldsymbol{\lambda}}\rangle_\Delta =
\mathcal{N}_{\boldsymbol{\lambda}}^{-1}$ (for $\boldsymbol{\lambda}$
sufficiently deep in the dominant cone).

\section{Saturated sets of weights}\label{sec5}
In the proof of Propositions \ref{tria:prp} and \ref{lc:prp} we
exploited geometric properties of saturated subsets of the weight
lattice that are of interest in their own right independent of the
current application. To formulate these properties some additional
notation is required. To a dominant weight $\boldsymbol{\lambda}$,
we associated the following finite subsets of the weight lattice
\begin{equation}
\mathcal{P}_+(\boldsymbol{\lambda}) :=\{ \boldsymbol{\mu} \in
\mathcal{P}_+ \mid \boldsymbol{\mu} \preceq \boldsymbol{\lambda}  \}
,\qquad \mathcal{P}(\boldsymbol{\lambda}):=
\bigcup_{\boldsymbol{\mu}\in\mathcal{P}_+(\boldsymbol{\lambda})}
W(\boldsymbol{\mu}) . \label{sat}
\end{equation}
The subset $\mathcal{P} (\boldsymbol{\lambda})$ is {\em saturated},
i.e. for each $\boldsymbol{\mu} \in
\mathcal{P}(\boldsymbol{\lambda})$ and $\boldsymbol{\alpha}\in
\mathbf{R}$ the $\boldsymbol{\alpha}$-string through
$\boldsymbol{\mu}$ of the form $\{ \boldsymbol{\mu}-\ell
\boldsymbol{\alpha} \mid \ell =0,\ldots ,\langle \boldsymbol{\mu}
,\boldsymbol{\alpha}^\vee\rangle\} $ belongs to
$\mathcal{P}(\boldsymbol{\lambda})$
\cite{bou:groupes,hum:introduction}. Reversely, any saturated subset
of the weight lattice containing $\boldsymbol{\lambda}$ necessarily
contains the whole of $\mathcal{P}(\boldsymbol{\lambda})$
\eqref{sat}.

It is known from the representation theory of simple Lie algebras
that $\mathcal{P}(\boldsymbol{\lambda})$ \eqref{sat} lies inside the
convex hull of the Weyl-orbit through the highest weight vector
$\boldsymbol{\lambda}$. More precisely, we have the following
geometric characterization of $\mathcal{P}(\boldsymbol{\lambda})$
taken from Ref. \cite[Prop. 11.3, part a)]{kac:infinite}.
\begin{lemma}[\cite{kac:infinite}]\label{kac:lem}
The saturated set $\mathcal{P}(\boldsymbol{\lambda})$ \eqref{sat}
amounts to the points of the translated root lattice
$\boldsymbol{\lambda}+\mathcal{Q}$ inside the convex hull of the
Weyl-orbit $W(\boldsymbol{\lambda})$:
\begin{equation*}
\mathcal{P}(\boldsymbol{\lambda}) = \text{Conv}
(W(\boldsymbol{\lambda}))\cap (\boldsymbol{\lambda}+\mathcal{Q}) .
\end{equation*}
\end{lemma}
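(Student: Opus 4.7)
The plan is to prove the two set inclusions separately, exploiting the $W$-invariance of each of the three sets in play to reduce to dominant weights, and then invoking the standard characterization that a dominant weight $\boldsymbol{\xi}$ belongs to $\text{Conv}(W(\boldsymbol{\lambda}))$ if and only if $\boldsymbol{\lambda}-\boldsymbol{\xi}$ is a non-negative real combination of positive roots.

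For the forward inclusion $\mathcal{P}(\boldsymbol{\lambda})\subseteq \text{Conv}(W(\boldsymbol{\lambda}))\cap(\boldsymbol{\lambda}+\mathcal{Q})$, I take $\boldsymbol{\mu}=w(\boldsymbol{\nu})$ for some $\boldsymbol{\nu}\in\mathcal{P}_+(\boldsymbol{\lambda})$. The identity $r_{\boldsymbol{\alpha}}(\boldsymbol{\xi})-\boldsymbol{\xi}=-\langle\boldsymbol{\xi},\boldsymbol{\alpha}^\vee\rangle\boldsymbol{\alpha}\in\mathcal{Q}$, iterated along a reduced expression of $w$, gives $w(\boldsymbol{\nu})-\boldsymbol{\nu}\in\mathcal{Q}$; combined with $\boldsymbol{\nu}-\boldsymbol{\lambda}\in-\mathcal{Q}_+$ this shows $\boldsymbol{\mu}\in\boldsymbol{\lambda}+\mathcal{Q}$. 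Since $\text{Conv}(W(\boldsymbol{\lambda}))$ is $W$-invariant, the convex-hull part further reduces to the dominant case $\boldsymbol{\mu}=\boldsymbol{\nu}$, where $\boldsymbol{\lambda}-\boldsymbol{\nu}\in\mathcal{Q}_+$ is in particular a non-negative real combination of positive roots and the characterization applies.

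For the reverse inclusion, let $\boldsymbol{\mu}\in\text{Conv}(W(\boldsymbol{\lambda}))\cap(\boldsymbol{\lambda}+\mathcal{Q})$. Since all three sets in the claimed equality are $W$-invariant I may assume $\boldsymbol{\mu}\in\mathcal{P}_+$, whereupon the task reduces to showing $\boldsymbol{\lambda}-\boldsymbol{\mu}\in\mathcal{Q}_+$. The characterization provides a non-negative real expansion of $\boldsymbol{\lambda}-\boldsymbol{\mu}$ in the positive roots, hence in the simple roots; since the simple roots form an $\mathbb{R}$-basis of $\mathbf{E}$ and $\boldsymbol{\lambda}-\boldsymbol{\mu}\in\mathcal{Q}$ by hypothesis, uniqueness of this expansion forces the coefficients to be non-negative integers, yielding $\boldsymbol{\lambda}-\boldsymbol{\mu}\in\mathcal{Q}_+$.

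The main obstacle is the non-trivial half of the auxiliary convex-hull characterization, namely that a dominant $\boldsymbol{\xi}$ with $\boldsymbol{\lambda}-\boldsymbol{\xi}$ a non-negative real combination of positive roots lies in $\text{Conv}(W(\boldsymbol{\lambda}))$. The standard route is a separating-hyperplane argument: if $\boldsymbol{\xi}\notin\text{Conv}(W(\boldsymbol{\lambda}))$, pick $\boldsymbol{\eta}\in\mathbf{E}$ strictly separating $\boldsymbol{\xi}$ from $W(\boldsymbol{\lambda})$ and let $w_0\in W$ be such that $\boldsymbol{\eta}^+:=w_0^{-1}(\boldsymbol{\eta})$ lies in the closure of the dominant Weyl chamber; applying $w_0^{-1}$ to the separation inequality evaluated at $w=w_0$ and using the standard bound $\langle w_0^{-1}(\boldsymbol{\xi}),\boldsymbol{\eta}^+\rangle\leq\langle\boldsymbol{\xi},\boldsymbol{\eta}^+\rangle$ for dominant $\boldsymbol{\xi},\boldsymbol{\eta}^+$ yields $\langle\boldsymbol{\lambda}-\boldsymbol{\xi},\boldsymbol{\eta}^+\rangle<0$, contradicting the non-negative pairing of a dominant vector with every positive root. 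Alternatively, one simply invokes \cite[Prop. 11.3, part a)]{kac:infinite} as the authors do.
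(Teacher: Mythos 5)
Your argument is correct, but note that the paper does not actually prove this lemma at all: it is imported verbatim as a citation to \cite[Prop.~11.3, part a)]{kac:infinite}, so your proposal supplies a self-contained proof where the authors supply none. The structure you use is the standard one and it holds together: the forward inclusion reduces, by $W$-invariance of $\text{Conv}(W(\boldsymbol{\lambda}))$ and the fact that $w(\boldsymbol{\nu})-\boldsymbol{\nu}\in\mathcal{Q}$ for any weight $\boldsymbol{\nu}$, to showing that a dominant $\boldsymbol{\nu}\preceq\boldsymbol{\lambda}$ lies in the hull, which is exactly the nontrivial half of the convex-hull characterization; your separating-hyperplane proof of that half is sound (the chain $\langle\boldsymbol{\lambda},\boldsymbol{\eta}^+\rangle<\langle w_0^{-1}(\boldsymbol{\xi}),\boldsymbol{\eta}^+\rangle\leq\langle\boldsymbol{\xi},\boldsymbol{\eta}^+\rangle$ does contradict $\langle\boldsymbol{\lambda}-\boldsymbol{\xi},\boldsymbol{\eta}^+\rangle\geq 0$, using that $\boldsymbol{\xi}-w_0^{-1}(\boldsymbol{\xi})$ is a non-negative real combination of positive roots for dominant $\boldsymbol{\xi}$). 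The reverse inclusion is also fine: the easy half of the characterization ($w(\boldsymbol{\lambda})=\boldsymbol{\lambda}-(\text{non-negative combination of positive roots})$, so the same holds for any point of the hull) plus the integrality argument via uniqueness of the expansion in simple roots (which form an $\mathbb{R}$-basis since $\mathbf{R}$ spans $\mathbf{E}$) correctly upgrades real non-negative coefficients to elements of $\mathbb{N}$. The only point worth making explicit is that in the reverse direction $\boldsymbol{\mu}\in\boldsymbol{\lambda}+\mathcal{Q}\subset\mathcal{P}$, which is what licenses both the $W$-invariance of $\boldsymbol{\lambda}+\mathcal{Q}$ and the conclusion $\boldsymbol{\mu}\in\mathcal{P}_+(\boldsymbol{\lambda})$; you use this implicitly. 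What your route buys is independence from \cite{kac:infinite}; what the citation buys the authors is brevity.
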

Since any dominant weight is maximal in its Weyl orbit (see e.g.
Ref. \cite[Sec. 13.2]{hum:introduction}), it is clear from this
lemma that all weights of $\mathcal{P}(\boldsymbol{\lambda})$
\eqref{sat} are obtained from $\boldsymbol{\lambda}$ via iterated
subtraction of positive roots. The following proposition provides
quantitative information on the number of times positive roots may
be subtracted from $\boldsymbol{\lambda}$ without leaving the convex
hull of $W(\boldsymbol{\lambda})$.

\begin{proposition}\label{saturated:prp}
Let $\boldsymbol{\lambda}\in\mathcal{P}_+$ and let
$n_{\boldsymbol{\alpha}}$, $\boldsymbol{\alpha}\in \mathbf{R}_+$ be
integers such that $0\leq n_{\boldsymbol{\alpha}}\leq
m^{(s)}(\boldsymbol{\lambda})$, $\forall
\boldsymbol{\alpha}\in\mathbf{R}_+^{(s)}$ and $0\leq
n_{\boldsymbol{\alpha}}\leq m^{(l)}(\boldsymbol{\lambda})$, $\forall
\boldsymbol{\alpha}\in\mathbf{R}_+^{(l)}$. Then one has that
\begin{equation*}
\boldsymbol{\lambda} -\sum_{\boldsymbol{\alpha}\in\mathbf{R}_+}
n_{\boldsymbol{\alpha}}\boldsymbol{\alpha} \in
\mathcal{P}(\boldsymbol{\lambda}) .
\end{equation*}
\end{proposition}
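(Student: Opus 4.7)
The plan is to invoke Lemma \ref{kac:lem}, thereby replacing the claim by a convex-hull inclusion, and then verify that inclusion through a single aggregated inequality anchored on an auxiliary dominant weight built from $\boldsymbol{\rho}^{(s)}$ and $\boldsymbol{\rho}^{(l)}$. Set $\mathbf{y}:=\boldsymbol{\lambda}-\sum_{\boldsymbol{\alpha}\in\mathbf{R}_+}n_{\boldsymbol{\alpha}}\boldsymbol{\alpha}$. Since the lattice condition $\mathbf{y}\in\boldsymbol{\lambda}+\mathcal{Q}$ is immediate, by Lemma \ref{kac:lem} it suffices to prove $\mathbf{y}\in\mathrm{Conv}(W(\boldsymbol{\lambda}))$. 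Letting $\mathcal{Q}_+^{\mathbb{R}}:=\mathrm{Span}_{\mathbb{R}_{\geq 0}}(\mathbf{R}_+)$ denote the real cone dual to the dominant chamber, the standard half-space description of a Weyl-orbit polytope translates this to: $\boldsymbol{\lambda}-w\mathbf{y}\in\mathcal{Q}_+^{\mathbb{R}}$ for every $w\in W$.

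Fix $w\in W$ and expand $\boldsymbol{\lambda}-w\mathbf{y}=(\boldsymbol{\lambda}-w\boldsymbol{\lambda})+\sum_{\boldsymbol{\alpha}\in\mathbf{R}_+}n_{\boldsymbol{\alpha}}w\boldsymbol{\alpha}$. Splitting the sum by the sign of $w\boldsymbol{\alpha}$ and discarding the already-nonnegative contributions with $w\boldsymbol{\alpha}\in\mathbf{R}_+$, the substitution $\boldsymbol{\gamma}:=-w\boldsymbol{\alpha}$ in the remaining terms reduces the task to verifying
\begin{equation*}
(\boldsymbol{\lambda}-w\boldsymbol{\lambda})-\sum_{\boldsymbol{\gamma}\in N^-(w^{-1})}n_{-w^{-1}\boldsymbol{\gamma}}\,\boldsymbol{\gamma}\;\in\;\mathcal{Q}_+^{\mathbb{R}},
\end{equation*}
where $N^-(w^{-1}):=\{\boldsymbol{\gamma}\in\mathbf{R}_+\mid w^{-1}\boldsymbol{\gamma}\in-\mathbf{R}_+\}$.

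The decisive move is to introduce the auxiliary weight $\boldsymbol{\lambda}':=\boldsymbol{\lambda}-m^{(s)}(\boldsymbol{\lambda})\boldsymbol{\rho}^{(s)}-m^{(l)}(\boldsymbol{\lambda})\boldsymbol{\rho}^{(l)}$. Using that $\langle\boldsymbol{\rho}^{(s)},\boldsymbol{\alpha}_i^\vee\rangle$ equals $1$ for a simple short root and $0$ for a simple long root (with the opposite values for $\boldsymbol{\rho}^{(l)}$), one checks that $\boldsymbol{\lambda}'$ is itself dominant, whence $\boldsymbol{\lambda}'-w\boldsymbol{\lambda}'\in\mathcal{Q}_+^{\mathbb{R}}$ for every $w$. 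The classical identity $\boldsymbol{\rho}-w\boldsymbol{\rho}=\sum_{\boldsymbol{\gamma}\in N^-(w^{-1})}\boldsymbol{\gamma}$ admits the length-graded refinements $\boldsymbol{\rho}^{(s)}-w\boldsymbol{\rho}^{(s)}=\sum_{\boldsymbol{\gamma}\in N^-(w^{-1})\cap\mathbf{R}_+^{(s)}}\boldsymbol{\gamma}$ and its long-root analogue, valid because $w$ permutes short (resp. long) roots amongst themselves. Combining these refinements with the bounds $n_{-w^{-1}\boldsymbol{\gamma}}\leq m^{(s)}(\boldsymbol{\lambda})$ or $m^{(l)}(\boldsymbol{\lambda})$ according as $\boldsymbol{\gamma}$ is short or long (which is legitimate because $-w^{-1}\boldsymbol{\gamma}\in\mathbf{R}_+$ has the same length as $\boldsymbol{\gamma}$, the Weyl group being length-preserving), the displayed inequality follows by aggregation over $\boldsymbol{\gamma}\in N^-(w^{-1})$.

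The main obstacle I anticipate is identifying the right anchor: a coefficient-by-coefficient comparison against the reduced-word decomposition of $\boldsymbol{\lambda}-w\boldsymbol{\lambda}=\sum_j\langle\boldsymbol{\lambda},\boldsymbol{\alpha}_{i_j}^\vee\rangle\boldsymbol{\beta}_j$ is doomed, because the individual coefficients can be strictly smaller than the uniform length-class bounds $m^{(s)},m^{(l)}$. The argument succeeds by packaging $m^{(s)}\boldsymbol{\rho}^{(s)}+m^{(l)}\boldsymbol{\rho}^{(l)}$ into the dominant weight $\boldsymbol{\lambda}'$, after which the desired inequality is only required to hold after summation, which is exactly what the dominance of $\boldsymbol{\lambda}'$ guarantees.
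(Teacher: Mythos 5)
Your argument is correct, and although it pivots on the same anchor as the paper's proof---the decomposition $\boldsymbol{\lambda}=\boldsymbol{\lambda}'+m^{(s)}(\boldsymbol{\lambda})\boldsymbol{\rho}^{(s)}+m^{(l)}(\boldsymbol{\lambda})\boldsymbol{\rho}^{(l)}$ with $\boldsymbol{\lambda}'$ dominant, which is precisely the splitting $\boldsymbol{\mu}+\boldsymbol{\nu}$ of Eq.~\eqref{wsub}---the convexity step is handled by a genuinely different, dual route. The paper works with the polytopes themselves: Lemma \ref{ch2:lem} identifies $\text{Conv}(W(\boldsymbol{\nu}))$ as an explicit zonotope over the positive roots, and Lemma \ref{ch1:lem} supplies the Minkowski-sum inclusion $\boldsymbol{\mu}+\text{Conv}(W(\boldsymbol{\nu}))\subset\text{Conv}(W(\boldsymbol{\mu}+\boldsymbol{\nu}))$, after which Lemma \ref{kac:lem} finishes. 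You instead pass to the half-space description of $\text{Conv}(W(\boldsymbol{\lambda}))$ and check one inequality per Weyl group element, absorbing the negative terms through the length-graded refinement of the classical identity $\boldsymbol{\rho}-w\boldsymbol{\rho}=\sum_{\boldsymbol{\gamma}\in N^-(w^{-1})}\boldsymbol{\gamma}$ together with the dominance of $\boldsymbol{\lambda}'$. All the ingredients you use do check out: dominance of $\boldsymbol{\lambda}'$ need only be tested against simple coroots, where $\langle\boldsymbol{\rho}^{(s)},\boldsymbol{\alpha}_i^\vee\rangle\in\{0,1\}$ detects shortness; the refinement of the $\rho$-identity is valid because $w$ permutes each length class; and the bound on $n_{-w^{-1}\boldsymbol{\gamma}}$ applies because $-w^{-1}\boldsymbol{\gamma}$ is a positive root of the same length as $\boldsymbol{\gamma}$. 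What the paper's route buys is the explicit polytope geometry (vertices of the two convex sets in Eq.~\eqref{ci}) that is reused afterwards in Propositions \ref{int:prp} and \ref{bound:prp}; what yours buys is independence from Lemmas \ref{ch1:lem} and \ref{ch2:lem} and a purely inequality-based verification---at the cost of invoking the standard but not entirely free characterization $\mathbf{y}\in\text{Conv}(W(\boldsymbol{\lambda}))\Leftrightarrow\boldsymbol{\lambda}-w\mathbf{y}\in\mathcal{Q}_+^{\mathbb{R}}$ for all $w\in W$, whose nontrivial direction is of essentially the same depth as Lemma \ref{ch1:lem}.
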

The proof of this proposition hinges on two lemmas.

\begin{lemma}\label{ch1:lem}
For any $\boldsymbol{\mu},\boldsymbol{\nu}\in\mathcal{P}_+$ the
following inclusion holds
\begin{equation*}
\boldsymbol{\mu}+\text{Conv}(W(\boldsymbol{\nu}))\subset
\text{Conv}(W(\boldsymbol{\mu}+\boldsymbol{\nu})) .
\end{equation*}
\end{lemma}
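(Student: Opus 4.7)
My plan is to reduce the stated inclusion to checking it at the extreme points of the left-hand side, and then to verify the extremal case using a standard description of $\text{Conv}(W(\boldsymbol{\xi}))$ in terms of the positive root cone.

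First, every element of $\boldsymbol{\mu}+\text{Conv}(W(\boldsymbol{\nu}))$ may be written as a convex combination $\boldsymbol{\mu}+\sum_j c_j w_j(\boldsymbol{\nu}) = \sum_j c_j\bigl(\boldsymbol{\mu}+w_j(\boldsymbol{\nu})\bigr)$ with $c_j\geq 0$ and $\sum_j c_j=1$. Since the target $\text{Conv}(W(\boldsymbol{\mu}+\boldsymbol{\nu}))$ is convex, it therefore suffices to show that for each individual $w\in W$,
\[
\boldsymbol{\mu}+w(\boldsymbol{\nu})\in\text{Conv}(W(\boldsymbol{\mu}+\boldsymbol{\nu})).
\]

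Next, I would invoke the standard characterization of the convex hull of a Weyl orbit: for any dominant $\boldsymbol{\xi}$, a vector $\mathbf{y}\in\mathbf{E}$ belongs to $\text{Conv}(W(\boldsymbol{\xi}))$ if and only if $\boldsymbol{\xi}-v(\mathbf{y})$ lies in the real positive cone $\mathbb{R}_{\geq 0}\cdot\mathbf{R}_+$ spanned by the positive roots, for every $v\in W$. This is the real-coefficient analog of the integral statement underlying Lemma \ref{kac:lem}, and follows from the $W$-invariance of both sets together with the fact that the dominant-chamber slice of $\text{Conv}(W(\boldsymbol{\xi}))$ is cut out precisely by the positive root hyperplanes with apex at $\boldsymbol{\xi}$. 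Applied to our situation, I must check that for every $v\in W$,
\[
(\boldsymbol{\mu}+\boldsymbol{\nu})-v\bigl(\boldsymbol{\mu}+w(\boldsymbol{\nu})\bigr)=\bigl(\boldsymbol{\mu}-v(\boldsymbol{\mu})\bigr)+\bigl(\boldsymbol{\nu}-vw(\boldsymbol{\nu})\bigr)
\]
belongs to $\mathbb{R}_{\geq 0}\cdot\mathbf{R}_+$.

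This final verification rests on the classical assertion that a dominant weight is the maximum of its own Weyl orbit in the dominance ordering: for dominant $\boldsymbol{\xi}$ and any $u\in W$ one has $\boldsymbol{\xi}-u(\boldsymbol{\xi})\in\mathcal{Q}_+$ (see e.g.\ \cite{hum:introduction}). Applying this to $\boldsymbol{\mu}$ with $u=v$ and to $\boldsymbol{\nu}$ with $u=vw$ puts both summands in $\mathcal{Q}_+\subset\mathbb{R}_{\geq 0}\cdot\mathbf{R}_+$, so the sum does as well, concluding the argument. The one point of potential obstacle is the real-coefficient characterization of the Weyl-orbit convex hull; however this is classical, and a direct proof---using $W$-invariance of the closed convex hull together with the observation that any dominant point strictly beyond $\boldsymbol{\xi}$ along a positive root direction can be separated from $\text{Conv}(W(\boldsymbol{\xi}))$ by a suitable root hyperplane---is short.
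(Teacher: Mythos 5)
Your proof is correct. The reduction to the orbit points $\boldsymbol{\mu}+w(\boldsymbol{\nu})$ is exactly the paper's first step, and both arguments ultimately rest on the same classical fact that a dominant weight dominates its entire Weyl orbit ($\boldsymbol{\xi}-u(\boldsymbol{\xi})\in\mathcal{Q}_+$ for $\boldsymbol{\xi}$ dominant). Where you diverge is in how membership in $\text{Conv}(W(\boldsymbol{\mu}+\boldsymbol{\nu}))$ is certified. You invoke the real-coefficient characterization of the convex hull of a Weyl orbit via the positive root cone, which is indeed classical but is an extra input you must justify separately; your sketch of that justification (reduce to the dominant representative of the orbit, then use $W$-invariance and the description of the dominant slice) is sound. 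The paper instead stays inside the weight lattice: it notes that every point of the $W$-stable set $W(\boldsymbol{\mu})+W(\boldsymbol{\nu})$ is $\preceq\boldsymbol{\mu}+\boldsymbol{\nu}$, hence that set is contained in the saturated set $\mathcal{P}(\boldsymbol{\mu}+\boldsymbol{\nu})$, and then applies Lemma \ref{kac:lem}, which is already available in the paper, to conclude containment in the convex hull. The paper's route thus reuses machinery it has already set up and needs no new convex-geometric fact, at the cost of passing through the auxiliary set $W(\boldsymbol{\mu})+W(\boldsymbol{\nu})$ and of using that the points in question lie in the translated root lattice; your route is slightly more general (it works for arbitrary real points, not just lattice points) but imports an external characterization that the paper deliberately avoids.
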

\begin{proof}
Clearly it suffices to show that
$\boldsymbol{\mu}+W(\boldsymbol{\nu})\subset
\text{Conv}(W(\boldsymbol{\mu}+\boldsymbol{\nu}))$. Since all
weights in $W(\boldsymbol{\mu})+W(\boldsymbol{\nu})$ are smaller
than or equal to $\boldsymbol{\mu}+\boldsymbol{\nu}$, it is evident
that the intersection of $W(\boldsymbol{\mu})+W(\boldsymbol{\nu})$
with the cone of dominant weights $\mathcal{P}_+$ is contained in
$\mathcal{P}_+(\boldsymbol{\mu}+\boldsymbol{\nu})$. We thus conclude
that $\boldsymbol{\mu}+W(\boldsymbol{\nu})\subset
W(\boldsymbol{\mu})+W(\boldsymbol{\nu})\subset
\mathcal{P}(\boldsymbol{\mu}+\boldsymbol{\nu})$. But then we have in
particular that $\boldsymbol{\mu}+W(\boldsymbol{\nu})\subset
\text{Conv}(W(\boldsymbol{\mu}+\boldsymbol{\nu}))$ in view of Lemma
\ref{kac:lem}., whence the inclusion stated in the lemma follows.
\end{proof}

\begin{lemma}\label{ch2:lem} Let $a,b\geq 0$ and let
$\boldsymbol{\rho}^{(s)}$, $\boldsymbol{\rho}^{(l)}$ be given by Eq.
\eqref{rho}. Then the convex hull of
$W(a\boldsymbol{\rho}^{(s)}+b\boldsymbol{\rho}^{(l)})$ reads
explicitly
\begin{eqnarray*}
\lefteqn{\text{Conv}(W(a\boldsymbol{\rho}^{(s)}+b\boldsymbol{\rho}^{(l)}))
=} && \\
&& \Bigl\{  a\sum_{\boldsymbol{\alpha}\in\mathbf{R}_+^{(s)}}
t_{\boldsymbol{\alpha}} \boldsymbol{\alpha} + b
\sum_{\boldsymbol{\alpha}\in\mathbf{R}_+^{(l)}}
t_{\boldsymbol{\alpha}} \boldsymbol{\alpha}\mid -{\textstyle
\frac{1}{2}}\leq t_{\boldsymbol{\alpha}}\leq {\textstyle
\frac{1}{2}}, \boldsymbol{\alpha}\in\mathbf{R}_+ \Bigr\} . \nonumber
\end{eqnarray*}
\end{lemma}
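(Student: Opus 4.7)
The plan is to prove the claimed equality by verifying two set inclusions, viewing the right-hand side as the centrally-symmetric zonotope $Z(a,b) := \sum_{\boldsymbol{\alpha}\in \mathbf{R}_+^{(s)}}\bigl[-\tfrac{a}{2}\boldsymbol{\alpha},\,\tfrac{a}{2}\boldsymbol{\alpha}\bigr] + \sum_{\boldsymbol{\alpha}\in \mathbf{R}_+^{(l)}}\bigl[-\tfrac{b}{2}\boldsymbol{\alpha},\,\tfrac{b}{2}\boldsymbol{\alpha}\bigr]$, i.e.\ the Minkowski sum of closed line segments.

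For the forward inclusion $\text{Conv}(W(a\boldsymbol{\rho}^{(s)}+b\boldsymbol{\rho}^{(l)})) \subseteq Z(a,b)$ I would rely on the fact that the Weyl group permutes the short positive roots among themselves up to sign, and independently does the same for the long positive roots. This furnishes the standard identity $w\boldsymbol{\rho}^{(s)}=\tfrac{1}{2}\sum_{\boldsymbol{\alpha}\in\mathbf{R}_+^{(s)}} \epsilon_{\boldsymbol{\alpha}}(w)\,\boldsymbol{\alpha}$, and analogously for $\boldsymbol{\rho}^{(l)}$, with $\epsilon_{\boldsymbol{\alpha}}(w):=+1$ if $w^{-1}\boldsymbol{\alpha}\in\mathbf{R}_+$ and $-1$ otherwise. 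Substituting, each orbit point $w(a\boldsymbol{\rho}^{(s)}+b\boldsymbol{\rho}^{(l)})$ realizes the defining expression for $Z(a,b)$ with coefficients $t_{\boldsymbol{\alpha}}=\pm 1/2$, and convexity of $Z(a,b)$ closes the argument.

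For the reverse inclusion I would exploit that $Z(a,b)$ is the convex hull of its vertices, so it suffices to show each vertex has the form $w(a\boldsymbol{\rho}^{(s)}+b\boldsymbol{\rho}^{(l)})$ for some $w\in W$. Given $\boldsymbol{\xi}\in\mathbf{E}$ generic in the sense that $\langle\boldsymbol{\xi},\boldsymbol{\alpha}\rangle\neq 0$ for every $\boldsymbol{\alpha}\in\mathbf{R}$, the unique maximizer of $\langle\boldsymbol{\xi},\cdot\rangle$ on $Z(a,b)$ is obtained by choosing $t_{\boldsymbol{\alpha}}=\tfrac{1}{2}\,\text{sgn}\langle\boldsymbol{\xi},\boldsymbol{\alpha}\rangle$, which is a vertex of $Z(a,b)$. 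Writing $\boldsymbol{\xi}=w\boldsymbol{\xi}_0$ with $\boldsymbol{\xi}_0$ strictly dominant and $w$ the unique Weyl element carrying the dominant chamber onto that of $\boldsymbol{\xi}$, the signs become $\text{sgn}\langle\boldsymbol{\xi}_0,w^{-1}\boldsymbol{\alpha}\rangle=\epsilon_{\boldsymbol{\alpha}}(w)$, whence this maximizer equals exactly $w(a\boldsymbol{\rho}^{(s)}+b\boldsymbol{\rho}^{(l)})$. Since every vertex of a polytope has a full-dimensional normal cone whose interior contains generic directions, every vertex is attained this way, and the desired inclusion follows.

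The main subtlety I anticipate is the degenerate case where $a$ or $b$ (or both) vanishes: then the generator-hyperplane arrangement of $Z(a,b)$ is strictly coarser than the full Weyl arrangement, and each coarse chamber is a union of Weyl chambers all yielding the same orbit point (corresponding to the stabilizer of $a\boldsymbol{\rho}^{(s)}+b\boldsymbol{\rho}^{(l)}$ in $W$). This does not obstruct the argument, however: the sign identification $\text{sgn}\langle\boldsymbol{\xi},\boldsymbol{\alpha}\rangle=\epsilon_{\boldsymbol{\alpha}}(w)$ still assigns a well-defined orbit point to each generic $\boldsymbol{\xi}$, and combining the two inclusions then yields the lemma.
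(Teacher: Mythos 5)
Your argument is correct, but the second half follows a genuinely different route from the paper's. For the inclusion $\text{Conv}(W(a\boldsymbol{\rho}^{(s)}+b\boldsymbol{\rho}^{(l)}))\subseteq Z(a,b)$ the paper merely observes that the right-hand side is convex, Weyl-group invariant, and contains the point $a\boldsymbol{\rho}^{(s)}+b\boldsymbol{\rho}^{(l)}$; your identity $w\boldsymbol{\rho}^{(s)}=\tfrac{1}{2}\sum_{\boldsymbol{\alpha}\in\mathbf{R}_+^{(s)}}\epsilon_{\boldsymbol{\alpha}}(w)\boldsymbol{\alpha}$ is the same underlying fact made quantitative, with the added benefit of exhibiting the orbit points as the $t_{\boldsymbol{\alpha}}=\pm\tfrac{1}{2}$ corners. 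The real divergence is in the reverse inclusion: the paper intersects both sides with the closed dominant chamber $\mathbf{C}$, uses the description of the points of the orbit hull in $\mathbf{C}$ as those reachable from the highest vertex by subtracting nonnegative combinations of positive roots, and then invokes Weyl-group invariance of both sides; you instead treat $Z(a,b)$ as a zonotope and identify each of its vertices as the unique maximizer of a generic linear functional, matching the resulting sign pattern $\text{sgn}\langle\boldsymbol{\xi},\boldsymbol{\alpha}\rangle=\epsilon_{\boldsymbol{\alpha}}(w)$ to the Weyl element $w$ carrying the dominant chamber onto that of $\boldsymbol{\xi}$. Both arguments are sound. The paper's is shorter because it leans on standard facts about convex hulls of Weyl orbits; yours is more self-contained polytope geometry, and it has the side benefit of explicitly identifying the vertex set of the zonotope with the orbit, which is close in spirit to the vertex analysis needed later for Proposition \ref{bound:prp}. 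Your treatment of the degenerate cases $a=0$ or $b=0$ (normal cones remain full-dimensional, so generic directions still single out each vertex, now with a nontrivial stabilizer collapsing several chambers onto one orbit point) is also correct.
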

\begin{proof}
The r.h.s. is manifestly convex, Weyl-group invariant, and it
contains the vertex
$a\boldsymbol{\rho}^{(s)}+b\boldsymbol{\rho}^{(l)}$. We thus
conclude that the l.h.s. is a subset of the r.h.s. Furthermore, the
intersection of the l.h.s. with the closure of the dominant Weyl
chamber $\mathbf{C}:=\{ \mathbf{x}\in\mathbf{E} \mid \langle
\mathbf{x},\boldsymbol{\alpha}\rangle \geq 0,\; \forall
\boldsymbol{\alpha}\in\mathbf{R}_+\}$ consists of all vectors in
$\mathbf{C}$ that can be obtained from the vertex
$a\boldsymbol{\rho}^{(s)}+b\boldsymbol{\rho}^{(l)}$ by subtracting
nonnegative linear combinations of the positive roots. (This is
because the image of the vertex
$a\boldsymbol{\rho}^{(s)}+b\boldsymbol{\rho}^{(l)}$ with respect to
the orthogonal reflection in a wall of the dominant chamber is
obtained by subtracting a nonnegative multiple of the corresponding
simple root perpendicular to the wall in question.) Hence, the
intersection of $\mathbf{C}$ with the r.h.s. is contained in the
intersection of $\mathbf{C}$ with the l.h.s. But then the r.h.s.
must be a subset of the l.h.s. as both sides are Weyl-group
invariant (and the closure of the dominant Weyl chamber $\mathbf{C}$
constitutes a fundamental domain for the action of the Weyl group on
$\mathbf{E}$).
\end{proof}

To prove Proposition \ref{saturated:prp}, we apply Lemma
\ref{ch1:lem} with
\begin{equation}\label{wsub}
\boldsymbol{\mu}=\boldsymbol{\lambda}-
m^{(s)}\boldsymbol{\rho}^{(s)}-m^{(l)}\boldsymbol{\rho}^{(l)}\quad
\text{and}\quad \boldsymbol{\nu}=m^{(s)}\boldsymbol{\rho}^{(s)}+
m^{(l)}\boldsymbol{\rho}^{(l)},
\end{equation}
where $m^{(s)}=m^{(s)}(\boldsymbol{\lambda})$ and
$m^{(l)}=m^{(l)}(\boldsymbol{\lambda})$, respectively. Upon
computing $\text{Conv}(W(\boldsymbol{\nu}))$ with the aid of Lemma
\ref{ch2:lem} this entails the inclusion
\begin{eqnarray}\label{ci}
\boldsymbol{\lambda}- \Bigl\{ m^{(s)}
\sum_{\boldsymbol{\alpha}\in\mathbf{R}_+^{(s)}}
t_{\boldsymbol{\alpha}} \boldsymbol{\alpha} + m^{(l)}
\sum_{\boldsymbol{\alpha}\in\mathbf{R}_+^{(l)}}
t_{\boldsymbol{\alpha}} \boldsymbol{\alpha} \mid 0\leq
t_{\boldsymbol{\alpha}}\leq 1,
\boldsymbol{\alpha}\in\mathbf{R}_+\Bigr\}  &&\\
\subset \text{Conv}(W(\boldsymbol{\lambda})) , && \nonumber
\end{eqnarray}
which implies Proposition \ref{saturated:prp} in view of Lemma
\ref{kac:lem}.

The vertices of the convex set on the r.h.s. of Eq. \eqref{ci} are
given by the orbit $W(\boldsymbol{\lambda}) $, whereas for a point
to be a vertex of the convex set on the l.h.s. it is necessary that
$t_{\boldsymbol{\alpha}}\in \{ 0,1 \}$,
$\forall\boldsymbol{\alpha}\in\mathbf{R}_+$. This observation gives
rise to the following additional information regarding the weights
in Proposition \ref{saturated:prp} lying on the highest-weight orbit
$W(\boldsymbol{\lambda})$.

\begin{proposition}\label{int:prp}
Let $\boldsymbol{\lambda}\in\mathcal{P}_+$ and let
$n_{\boldsymbol{\alpha}}$, $\boldsymbol{\alpha}\in \mathbf{R}_+$ be
integers such that $0\leq n_{\boldsymbol{\alpha}}\leq
m^{(s)}(\boldsymbol{\lambda})$, $\forall
\boldsymbol{\alpha}\in\mathbf{R}_+^{(s)}$ and $0\leq
n_{\boldsymbol{\alpha}}\leq m^{(l)}(\boldsymbol{\lambda})$, $\forall
\boldsymbol{\alpha}\in\mathbf{R}_+^{(l)}$. Then $$
\boldsymbol{\lambda} -\sum_{\boldsymbol{\alpha}\in\mathbf{R}_+}
n_{\boldsymbol{\alpha}}\boldsymbol{\alpha} \in
W(\boldsymbol{\lambda}) $$ implies that $ n_{\boldsymbol{\alpha}}\in
\{ 0, m^{(s)}(\boldsymbol{\lambda})\}$, $\forall
\boldsymbol{\alpha}\in\mathbf{R}_+^{(s)}$ and $
n_{\boldsymbol{\alpha}}\in \{ 0, m^{(l)}(\boldsymbol{\lambda})\}$,
$\forall \boldsymbol{\alpha}\in\mathbf{R}_+^{(l)}$.
\end{proposition}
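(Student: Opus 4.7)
The plan is to deduce the statement geometrically from the inclusion \eqref{ci} already established in the proof of Proposition~\ref{saturated:prp}. Write $m^{(s)}:=m^{(s)}(\boldsymbol{\lambda})$ and $m^{(l)}:=m^{(l)}(\boldsymbol{\lambda})$, and set $t_{\boldsymbol{\alpha}}:=n_{\boldsymbol{\alpha}}/m^{(s)}$ for $\boldsymbol{\alpha}\in\mathbf{R}_+^{(s)}$ and $t_{\boldsymbol{\alpha}}:=n_{\boldsymbol{\alpha}}/m^{(l)}$ for $\boldsymbol{\alpha}\in\mathbf{R}_+^{(l)}$ (the cases $m^{(s)}=0$ or $m^{(l)}=0$ are trivial, since then all corresponding $n_{\boldsymbol{\alpha}}$ are forced to vanish). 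The hypotheses $0\leq n_{\boldsymbol{\alpha}}\leq m^{(s)}$ (respectively $m^{(l)}$) translate into $t_{\boldsymbol{\alpha}}\in[0,1]$, so the weight $\boldsymbol{\mu}:=\boldsymbol{\lambda}-\sum_{\boldsymbol{\alpha}\in\mathbf{R}_+}n_{\boldsymbol{\alpha}}\boldsymbol{\alpha}$ lies in the zonotope appearing on the left-hand side of \eqref{ci}.

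Next I would observe that $\boldsymbol{\mu}\in W(\boldsymbol{\lambda})$ forces $\boldsymbol{\mu}$ to be an extreme point of $\text{Conv}(W(\boldsymbol{\lambda}))$: indeed, every point of $W(\boldsymbol{\lambda})$ has the same norm $\|\boldsymbol{\lambda}\|$ (because $W$ acts by orthogonal transformations), so the orbit lies on a sphere and each orbit point is exposed by a supporting hyperplane of the convex hull. The elementary fact that an extreme point of a convex set $B$ which happens to lie inside a convex subset $A\subseteq B$ remains extreme in $A$, applied to the inclusion \eqref{ci}, then shows that $\boldsymbol{\mu}$ is also an extreme point of the zonotope on the left-hand side.

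I would finish with a standard perturbation argument: suppose that some parametrization of $\boldsymbol{\mu}$ by parameters in $[0,1]$ has an interior value $t_{\boldsymbol{\alpha}_0}\in(0,1)$. Shifting $t_{\boldsymbol{\alpha}_0}$ by $\pm\epsilon$ for small $\epsilon>0$ while keeping the remaining $t_{\boldsymbol{\alpha}}$ fixed produces two points that still lie in the zonotope and whose midpoint is $\boldsymbol{\mu}$; since $\boldsymbol{\alpha}_0\neq 0$, these two points are distinct, contradicting extremality. Hence every admissible parametrization---and in particular the one given by $t_{\boldsymbol{\alpha}}=n_{\boldsymbol{\alpha}}/m^{(s)}$ or $n_{\boldsymbol{\alpha}}/m^{(l)}$---must satisfy $t_{\boldsymbol{\alpha}}\in\{0,1\}$, yielding $n_{\boldsymbol{\alpha}}\in\{0,m^{(s)}\}$ or $\{0,m^{(l)}\}$ as required.

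The mildly delicate point is that the generators $\{\boldsymbol{\alpha}\}_{\boldsymbol{\alpha}\in\mathbf{R}_+}$ of the zonotope are typically linearly dependent, so several parametrizations of the same $\boldsymbol{\mu}$ may coexist. The perturbation argument above bypasses this cleanly because it applies independently to each parametrization, so no finer combinatorial analysis of zonotope faces is necessary.
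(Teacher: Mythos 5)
Your proof is correct and follows essentially the same route as the paper: the authors likewise deduce the proposition from the inclusion \eqref{ci}, observing that the points of $W(\boldsymbol{\lambda})$ are the vertices of $\text{Conv}(W(\boldsymbol{\lambda}))$ and that being a vertex of the zonotope on the left-hand side forces every $t_{\boldsymbol{\alpha}}\in\{0,1\}$. Your sphere/strict-convexity and perturbation arguments merely spell out the details the paper leaves as a brief observation, including the point about linearly dependent generators.
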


Much weaker versions of the statements in Proposition
\ref{saturated:prp} and Proposition \ref{int:prp} can be found in
the appendix of Ref. \cite{die:asymptotic}. For the root systems of
type $A$ Proposition \ref{saturated:prp} and a somewhat weaker form
of Proposition \ref{int:prp} were verified in Ref.
\cite{rui:factorized} by means of an explicit combinatorial
analysis.

Proposition \ref{int:prp} provides a necessary condition on the
coefficients $n_{\boldsymbol{\alpha}}$ such that a weight in
Proposition \ref{saturated:prp} lies on the highest-weight orbit
$W(\boldsymbol{\lambda})$. We will now wrap up with a more precise
characterization of the weights in question when
$\boldsymbol{\lambda}$ is strongly dominant.

\begin{lemma}\label{bound:lem}
For any $\boldsymbol{\mu},\boldsymbol{\nu}\in\mathcal{P}_+$ with
$\boldsymbol{\nu}$ strongly dominant (i.e. with
$m^{(s)}(\boldsymbol{\nu})$,  $m^{(l)}(\boldsymbol{\nu})$ strictly
positive), the intersection of
$\boldsymbol{\mu}+W(\boldsymbol{\nu})$ and
$W(\boldsymbol{\mu}+\boldsymbol{\nu})$ is given by
\begin{equation*}
(\boldsymbol{\mu}+W(\boldsymbol{\nu}))\cap
W(\boldsymbol{\mu}+\boldsymbol{\nu})=\boldsymbol{\mu}+
W_{\boldsymbol{\mu}}(\boldsymbol{\nu}).
\end{equation*}
\end{lemma}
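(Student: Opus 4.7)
The plan is to verify the two inclusions separately, with the reverse inclusion being the substantive one. For the inclusion $\supseteq$, if $u \in W_{\boldsymbol{\mu}}$ then $\boldsymbol{\mu}+u(\boldsymbol{\nu})=u(\boldsymbol{\mu})+u(\boldsymbol{\nu})=u(\boldsymbol{\mu}+\boldsymbol{\nu}) \in W(\boldsymbol{\mu}+\boldsymbol{\nu})$, while of course $\boldsymbol{\mu}+u(\boldsymbol{\nu}) \in \boldsymbol{\mu}+W(\boldsymbol{\nu})$. This direction is essentially bookkeeping and needs no hypothesis of strong dominance.

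For the inclusion $\subseteq$, suppose that we have $w,w' \in W$ with $\boldsymbol{\mu}+w(\boldsymbol{\nu})=w'(\boldsymbol{\mu}+\boldsymbol{\nu})$. The strategy is to exploit norm invariance: taking squared norms of both sides and using the $W$-invariance $\|w(\boldsymbol{\nu})\|^2=\|\boldsymbol{\nu}\|^2$, $\|w'(\boldsymbol{\mu}+\boldsymbol{\nu})\|^2=\|\boldsymbol{\mu}+\boldsymbol{\nu}\|^2$ together with cancellation of the $\|\boldsymbol{\mu}\|^2$ and $\|\boldsymbol{\nu}\|^2$ terms reduces the identity to $\langle \boldsymbol{\mu},w(\boldsymbol{\nu})\rangle=\langle \boldsymbol{\mu},\boldsymbol{\nu}\rangle$, or equivalently, $\langle \boldsymbol{\mu}-w^{-1}(\boldsymbol{\mu}),\boldsymbol{\nu}\rangle=0$ by $W$-invariance of $\langle \cdot,\cdot\rangle$.

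I then plan to invoke the classical fact that any Weyl conjugate of a dominant weight $\boldsymbol{\mu}$ satisfies $w^{-1}(\boldsymbol{\mu}) \preceq \boldsymbol{\mu}$, so one has the expansion $\boldsymbol{\mu}-w^{-1}(\boldsymbol{\mu})=\sum_{\boldsymbol{\alpha}\in\mathbf{R}_+} c_{\boldsymbol{\alpha}}\boldsymbol{\alpha}$ with nonnegative integer coefficients $c_{\boldsymbol{\alpha}}$. The strong dominance hypothesis on $\boldsymbol{\nu}$ ensures $\langle \boldsymbol{\nu},\boldsymbol{\alpha}^\vee\rangle \geq 1$ for every $\boldsymbol{\alpha}\in\mathbf{R}_+$ (both short and long, using the hypothesis on $m^{(s)}$ and $m^{(l)}$), so in particular $\langle \boldsymbol{\alpha},\boldsymbol{\nu}\rangle>0$ for every positive root. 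Plugging this into the vanishing inner product $\sum c_{\boldsymbol{\alpha}}\langle \boldsymbol{\alpha},\boldsymbol{\nu}\rangle=0$ forces every $c_{\boldsymbol{\alpha}}=0$, so $w^{-1}(\boldsymbol{\mu})=\boldsymbol{\mu}$, i.e. $w \in W_{\boldsymbol{\mu}}$. Therefore $\boldsymbol{\mu}+w(\boldsymbol{\nu}) \in \boldsymbol{\mu}+W_{\boldsymbol{\mu}}(\boldsymbol{\nu})$, as required.

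There is no real obstacle here beyond correctly identifying which ingredient does the work. The proof is essentially a norm-comparison argument, and the only moment where strong dominance enters is the step in which one upgrades $\langle \boldsymbol{\alpha},\boldsymbol{\nu}\rangle \geq 0$ to strict positivity in order to conclude that each $c_{\boldsymbol{\alpha}}$ vanishes. Without strong dominance one could only conclude that $\boldsymbol{\mu}-w^{-1}(\boldsymbol{\mu})$ lies in the span of those positive roots perpendicular to $\boldsymbol{\nu}$, which would fail to pin down $w^{-1}(\boldsymbol{\mu})=\boldsymbol{\mu}$ and consequently cause the stated equality to break down.
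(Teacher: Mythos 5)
Your proof is correct and follows essentially the same route as the paper: the easy inclusion $\supseteq$ via $\boldsymbol{\mu}+u(\boldsymbol{\nu})=u(\boldsymbol{\mu}+\boldsymbol{\nu})$ for $u\in W_{\boldsymbol{\mu}}$, and the reverse inclusion by comparing squared norms to get $\langle\boldsymbol{\mu}-w^{-1}(\boldsymbol{\mu}),\boldsymbol{\nu}\rangle=0$ and then using $\boldsymbol{\mu}-w^{-1}(\boldsymbol{\mu})\in\mathcal{Q}_+$ together with strong dominance of $\boldsymbol{\nu}$ to force $w\in W_{\boldsymbol{\mu}}$.
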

\begin{proof}
The r.h.s. is manifestly contained in the intersection on the l.h.s.
It is therefore sufficient to demonstrate that the l.h.s. is also a
subset of the r.h.s.  The intersection on the l.h.s. consists of
those weights such that
$\boldsymbol{\mu}+w_1(\boldsymbol{\nu})=w_2(\boldsymbol{\mu}+\boldsymbol{\nu})$
for some $w_1, w_2\in W$. This implies that $\|
\boldsymbol{\mu}+w_1(\boldsymbol{\nu})\|^2=\|
\boldsymbol{\mu}+\boldsymbol{\nu}\|^2$, or equivalently,
$\langle\boldsymbol{\mu}-w_1^{-1}(\boldsymbol{\mu}),
\boldsymbol{\nu}\rangle =0$. But then we must have that
$w_1(\boldsymbol{\mu})=\boldsymbol{\mu}$ (and thus $w_2=w_1$) since
$\boldsymbol{\mu}-w_1^{-1}(\boldsymbol{\mu})\in\mathcal{Q}_+$ and
$\boldsymbol{\nu}$ is strongly dominant. It thus follows that the
weights in question form part of the r.h.s.
\end{proof}

By specializing Lemma \ref{bound:lem} to weights $\boldsymbol{\mu}$
and $\boldsymbol{\nu}$ of the form in Eq. \eqref{wsub} with
$\boldsymbol{\lambda}$ strongly dominant, and with $0<m^{(s)}\leq
m^{(s)}(\boldsymbol{\lambda})$ and $0<m^{(l)}\leq
m^{(l)}(\boldsymbol{\lambda})$, we arrive at the following
proposition.

\begin{proposition}\label{bound:prp}
Let $\boldsymbol{\lambda}\in\mathcal{P}_+$ be strongly dominant and
let $n_{\boldsymbol{\alpha}}$, $\boldsymbol{\alpha}\in \mathbf{R}_+$
be integers such that $0\leq n_{\boldsymbol{\alpha}}\leq m^{(s)}$,
$\forall \boldsymbol{\alpha}\in\mathbf{R}_+^{(s)}$ and $0\leq
n_{\boldsymbol{\alpha}}\leq m^{(l)}$, $\forall
\boldsymbol{\alpha}\in\mathbf{R}_+^{(l)}$, where $0<m^{(s)}\leq
m^{(s)}(\boldsymbol{\lambda})$ and $0<m^{(l)}\leq
m^{(l)}(\boldsymbol{\lambda})$. Then $ \boldsymbol{\lambda}
-\sum_{\boldsymbol{\alpha}\in\mathbf{R}_+}
n_{\boldsymbol{\alpha}}\boldsymbol{\alpha} \in
W(\boldsymbol{\lambda}) $ if and only if it is of the form
\begin{equation*}
\boldsymbol{\lambda} - m^{(s)} \sum_{\boldsymbol{\alpha}\in
\mathbf{S}_w\cap \mathbf{R}_+^{(s)} }\boldsymbol{\alpha} - m^{(l)}
\sum_{\boldsymbol{\alpha}\in \mathbf{S}_w\cap
\mathbf{R}_+^{(l)}}\boldsymbol{\alpha} ,
\end{equation*}
where $\mathbf{S}_w:=\{ \boldsymbol{\alpha}\in\mathbf{R}_+\mid
w(\boldsymbol{\alpha})\not\in\mathbf{R}_+ \}$ for some $w\in
W_{\tilde{\boldsymbol{\lambda}}}$ with
$$\tilde{\boldsymbol{\lambda}}:=\boldsymbol{\lambda}-
m^{(s)}\boldsymbol{\rho}^{(s)}-m^{(l)}\boldsymbol{\rho}^{(l)}.$$
\end{proposition}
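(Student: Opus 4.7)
My plan is to specialize Lemma \ref{bound:lem} to $\boldsymbol{\mu} = \tilde{\boldsymbol{\lambda}}$ and $\boldsymbol{\nu} = m^{(s)}\boldsymbol{\rho}^{(s)} + m^{(l)}\boldsymbol{\rho}^{(l)}$, exactly as in Eq.\ \eqref{wsub}, and to read off
\[
(\tilde{\boldsymbol{\lambda}} + W(\boldsymbol{\nu})) \cap W(\boldsymbol{\lambda}) = \tilde{\boldsymbol{\lambda}} + W_{\tilde{\boldsymbol{\lambda}}}(\boldsymbol{\nu}).
\]
The hypotheses are immediate: $m^{(s)}\leq m^{(s)}(\boldsymbol{\lambda})$ and $m^{(l)}\leq m^{(l)}(\boldsymbol{\lambda})$ force $\tilde{\boldsymbol{\lambda}}\in\mathcal{P}_+$, while $\boldsymbol{\nu}$ is strongly dominant by the standard fact that $\boldsymbol{\rho}^{(s)}$ (resp.\ $\boldsymbol{\rho}^{(l)}$) pairs to $1$ on short (resp.\ long) simple coroots and to $0$ on the others, combined with $m^{(s)},m^{(l)}>0$. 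To put $\tilde{\boldsymbol{\lambda}} + w(\boldsymbol{\nu})$ into the shape claimed by the proposition I invoke the short/long-localized $\rho$-identity: any $w\in W$ permutes $\mathbf{R}^{(s)}$ and $\mathbf{R}^{(l)}$ separately among themselves, whence $w(\boldsymbol{\rho}^{(s)}) = \boldsymbol{\rho}^{(s)} - \sum_{\boldsymbol{\alpha}\in\mathbf{S}_w\cap\mathbf{R}_+^{(s)}}\boldsymbol{\alpha}$ and similarly for $\boldsymbol{\rho}^{(l)}$, so
\[
\tilde{\boldsymbol{\lambda}} + w(\boldsymbol{\nu}) = w(\boldsymbol{\lambda}) = \boldsymbol{\lambda} - m^{(s)}\!\!\sum_{\boldsymbol{\alpha}\in\mathbf{S}_w\cap\mathbf{R}_+^{(s)}}\!\!\boldsymbol{\alpha} - m^{(l)}\!\!\sum_{\boldsymbol{\alpha}\in\mathbf{S}_w\cap\mathbf{R}_+^{(l)}}\!\!\boldsymbol{\alpha}.
\]
For $w\in W_{\tilde{\boldsymbol{\lambda}}}$ this is of the required form and visibly lies in $W(\boldsymbol{\lambda})$, which settles the ``if'' direction.

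For the converse, suppose $\boldsymbol{\omega} = \boldsymbol{\lambda} - \sum n_{\boldsymbol{\alpha}}\boldsymbol{\alpha}$ lies in $W(\boldsymbol{\lambda})$ with $n_{\boldsymbol{\alpha}}$ satisfying the bounds of the proposition. Setting $t_{\boldsymbol{\alpha}} = \tfrac{1}{2} - n_{\boldsymbol{\alpha}}/m^{(s)}$ for $\boldsymbol{\alpha}\in\mathbf{R}_+^{(s)}$ (and analogously with $m^{(l)}$ for the long roots), one has $t_{\boldsymbol{\alpha}}\in[-\tfrac{1}{2},\tfrac{1}{2}]$ and $\boldsymbol{\omega}-\tilde{\boldsymbol{\lambda}} = m^{(s)}\sum_{\mathbf{R}_+^{(s)}}t_{\boldsymbol{\alpha}}\boldsymbol{\alpha} + m^{(l)}\sum_{\mathbf{R}_+^{(l)}}t_{\boldsymbol{\alpha}}\boldsymbol{\alpha}$, which by Lemma \ref{ch2:lem} lies in $\text{Conv}(W(\boldsymbol{\nu}))$. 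Lemma \ref{ch1:lem} then embeds $\tilde{\boldsymbol{\lambda}} + \text{Conv}(W(\boldsymbol{\nu}))$ inside $\text{Conv}(W(\boldsymbol{\lambda}))$, of which $\boldsymbol{\omega}$ is a genuine vertex (the Weyl orbit $W(\boldsymbol{\lambda})$ constitutes the vertex set of its convex hull). Extremality transfers to every convex subset containing an extreme point, so $\boldsymbol{\omega}\in\tilde{\boldsymbol{\lambda}} + W(\boldsymbol{\nu})$; Lemma \ref{bound:lem} pins $\boldsymbol{\omega}$ further down to $\tilde{\boldsymbol{\lambda}} + W_{\tilde{\boldsymbol{\lambda}}}(\boldsymbol{\nu})$, and the translation of the previous paragraph converts this into the stated form.

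The step I expect to require most care is the vertex-lifting just sketched: one has to confirm both that the Weyl orbit $W(\boldsymbol{\lambda})$ constitutes the actual vertex set of its convex hull (each orbit point being the unique maximum of a suitable linear functional, thanks to the constancy of $\|w\boldsymbol{\lambda}\|$) and that extremality in an ambient polytope descends to any convex subset containing the point. Granted these convex-geometric observations, the rest of the argument is a straightforward assembly of Lemmas \ref{ch1:lem}, \ref{ch2:lem}, and \ref{bound:lem} together with the short/long-localized $\rho$-identity described above.
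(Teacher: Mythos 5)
Your argument is correct and follows essentially the same route as the paper's own proof: both reduce the statement to identifying the points in question as the common vertices of the two convex sets in the inclusion \eqref{ci}, i.e.\ as the intersection $(\tilde{\boldsymbol{\lambda}}+W(\boldsymbol{\nu}))\cap W(\boldsymbol{\lambda})$, and then apply Lemma \ref{bound:lem} together with the localized $\boldsymbol{\rho}$-identity to put the result in the stated form. The only cosmetic discrepancy is a harmless $w\leftrightarrow w^{-1}$ swap in that identity (the paper writes $w^{-1}(m^{(s)}\boldsymbol{\rho}^{(s)}+m^{(l)}\boldsymbol{\rho}^{(l)})$), which does not affect the conclusion since $W_{\tilde{\boldsymbol{\lambda}}}$ is a group.
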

\begin{proof}
The weights characterized by the premises of the proposition consist
of the common vertices of the convex sets on both sides of Eq.
\eqref{ci}. It is immediate from the previous discussion that the
vertices at issue are given by the weights in the intersection of
$\boldsymbol{\mu}+W(\boldsymbol{\nu})$ and
$W(\boldsymbol{\mu}+\boldsymbol{\nu})$, with $\boldsymbol{\mu}$ and
$\boldsymbol{\nu}$ given by Eq. \eqref{wsub}. According to Lemma
\ref{bound:lem}, this intersection consists of all weights of the
form
\begin{eqnarray*}
\boldsymbol{\lambda}-
m^{(s)}\boldsymbol{\rho}^{(s)}-m^{(l)}\boldsymbol{\rho}^{(l)}+
w^{-1}(m^{(s)}\boldsymbol{\rho}^{(s)}+
m^{(l)}\boldsymbol{\rho}^{(l)}) \\
= \boldsymbol{\lambda}- m^{(s)}\sum_{\begin{subarray}{c}
\boldsymbol{\alpha}\in\mathbf{R}_+^{(s)} \\
w(\boldsymbol{\alpha})\not\in \mathbf{R}_+^{(s)}\end{subarray}}
\boldsymbol{\alpha} - m^{(l)}\sum_{\begin{subarray}{c}
\boldsymbol{\alpha}\in\mathbf{R}_+^{(l)} \\
w(\boldsymbol{\alpha})\not\in \mathbf{R}_+^{(l)}\end{subarray}}
\boldsymbol{\alpha} ,
\end{eqnarray*}
where $w$ runs through the stabilizer of the weight
$\boldsymbol{\lambda}-
m^{(s)}\boldsymbol{\rho}^{(s)}-m^{(l)}\boldsymbol{\rho}^{(l)}$.
\end{proof}

\begin{remark}
Proposition \ref{bound:prp} implies Proposition \ref{int:prp} for
$\boldsymbol{\lambda}$ strongly dominant.  Indeed, the stabilizer
$W_{\tilde{\boldsymbol{\lambda}}}$ is generated by the reflections
in the short simple roots $\boldsymbol{\alpha}$ such that $\langle
\boldsymbol{\lambda},\boldsymbol{\alpha}^\vee\rangle = m^{(s)}$
(these reflections permute the roots of $\mathbf{R}^{(l)}_+$) and by
the reflections in the long simple roots $\boldsymbol{\alpha}$ such
that $ \langle \boldsymbol{\lambda},\boldsymbol{\alpha}^\vee\rangle
= m^{(l)}$ (these reflections permute the roots of
$\mathbf{R}^{(s)}_+$). Hence, for
$m^{(s)}<m^{(s)}(\boldsymbol{\lambda})$ or
$m^{(l)}<m^{(l)}(\boldsymbol{\lambda})$ one has that
$\mathbf{S}_w\cap\mathbf{R}_+^{(s)}=\emptyset$ or
$\mathbf{S}_w\cap\mathbf{R}_+^{(l)}=\emptyset$, respectively. In
other words, nonvanishing contributions to the sums in the formula
of Proposition \ref{bound:prp} only arise where $m^{(s)}$ or
$m^{(l)}$ assume their maximum values
$m^{(s)}(\boldsymbol{\lambda})$ and $m^{(l)}(\boldsymbol{\lambda})$,
respectively.
\end{remark}

\section*{Acknowledgments} Thanks are due to W. Soergel for indicating
the proof of Proposition \ref{saturated:prp}.

\bibliographystyle{amsplain}

\end{document}